\documentclass{amsproc}
\usepackage[T1]{fontenc}
\usepackage{hyperref}
\hypersetup{
    colorlinks=true,
    linkcolor=blue,
    urlcolor=blue,
    citecolor=blue
}
\usepackage{verbatim}
\usepackage{tgpagella}
\usepackage{xcolor}
\usepackage{tikz-cd}
\usepackage{enumitem}
\usepackage{amsmath,amssymb,amsthm,mathtools,mathrsfs,amsfonts}

\newtheorem{theorem}{Theorem}
\newtheorem{example}[theorem]{Example}
\newtheorem{lemma}[theorem]{Lemma}
\newtheorem{remark}[theorem]{Remark}
\newtheorem{definition}[theorem]{Definition}

\newtheorem{conjecture}[theorem]{Conjecture}
\newtheorem{proposition}[theorem]{Proposition}

\def\AA{\mathbb{A}}
\def\PP{\mathbb{P}}

\def\sF{\mathscr{F}}
\def\sG{\mathscr{G}}

\def\KK{\mathbb{K}}
\def\N{\mathcal{N}}

\def\V{\mathcal{V}}

\def\ZZ{\mathbb{Z}}
\def\NN{\mathbb{N}}
\def\CC{\mathbb{C}}
\DeclareMathOperator{\cha}{char}

\DeclareMathOperator{\ara}{ara}
\DeclareMathOperator{\cd}{cd}
\DeclareMathOperator{\chara}{char}
\DeclareMathOperator{\Spec}{Spec}
\DeclareMathOperator{\Proj}{Proj}
\DeclareMathOperator{\Tor}{Tor}
\DeclareMathOperator{\codim}{codim}
\DeclareMathOperator{\ecd}{\acute{e}cd}

\DeclareMathOperator{\height}{height}
\DeclareMathOperator{\qccd}{qccd}

\newcommand{\citeproject}[1]{%
  \cite[Tag~\href{https://stacks.math.columbia.edu/tag/#1}{#1}]{stacks-project}%
}

\begin{document}

\title{\'Etale and Quasicoherent Cohomological Dimensions of Subspace Arrangements}

\author{Manolis C. Tsakiris \, \, \, \, \, \, Matteo Varbaro}

\address[M.Tsakiris]{State Key Laboratory of Mathematical Sciences, Academy of Mathematics and Systems Science, Chinese Academy of Sciences, 100190, Beijing, China}
\email{manolis@amss.ac.cn}

\address[M. Varbaro]{Dipartimento di Matematica,  Universit\`a di Genova, Via Dodecaneso 35, 16146 Genova, Italy}
\email{Matteo.Varbaro@unige.it}

\maketitle


\begin{abstract}
We report some work in progress on the relationship between \'etale and quasi-coherent cohomological dimensions. 
\end{abstract}

\section{Introduction}

Let $U$ be a Noetherian scheme. Given a sheaf $\sF$ on the Zariski (respectively, \'etale) site of $U$, $H^i(U,\sF)$ (respectively, $H_{\text{\'et}}^i(U,\sF)$) denotes the $i$th sheaf cohomology of $\sF$. Recall that the {\it quasi-coherent cohomological dimension} of $U$ is
\[\qccd(U)=\sup\{r\in\NN:H^r(U,\sF)\neq 0 \mbox{ for some quasi-coherent sheaf $\sF$}\},\]
while the {\it \'etale cohomological dimension} of $U$ is
\[\ecd(U)=\sup\{r\in\NN:H^r_{\text{\'et}}(U,\sF)\neq 0 \mbox{ for some torsion sheaf $\sF$}\}.\]
As every torsion sheaf is the direct limit of constructible sheaves \cite[Proposition 5.8.8]{LeiFu}, in the definition above we may replace "torsion" by "constructible".

The notions of \'etale and quasi-coherent cohomological dimensions of a Noetherian scheme are known to provide measures for both its topological and arithmetical complexity \cite{lyubeznik1993etale,huneke1990vanishing}. For example, if $I$ is an ideal of a finitely generated $\KK$-algebra $A$, where $\KK$ is a separably closed field, its arithmetical rank
\[\ara(I)=\min\{r\in\NN: \exists \ f_1,\ldots ,f_r\in A \mbox{ with }\sqrt{I}=\sqrt{(f_1,\ldots ,f_r)}\}\]
is bounded below by these cohomological dimensions. Precisely, if we are in either of the following situations,
\begin{enumerate}
\item $U=\Spec(A)\setminus \V(I)$;
\item $U=\Proj(A)\setminus \V_+(I)$ if $A$ is standard graded, $A_0=\KK$ and $I$ is graded,
\end{enumerate}
we have
\[\ara(I)\geq \qccd(U)+1, \ \ \ \ara(I)\geq \ecd(U)+1-\dim(U).\]
In the affine case, the first inequality simply follows by computing sheaf cohomology as \v Cech cohomology, since $U$ can be covered by $\ara(I)$ many affines subsets. As explained in \cite{bruns1990number}, the second inequality follows again since $U$ can be covered by $\ara(I)$ many affine subsets, inductively using the Mayer-Vietoris sequence and Artin's theorem saying that the \'etale cohomological dimension of an affine scheme of finite type over a separably closed field coincides with its dimension, see Theorem 7.2 of Chapter VI in \cite{milne1980etale}. In the graded setting the above strategies work in the same way to prove the inequalities additionally requiring that the elements giving the arithmetical rank are homogeneous. Alternatively, one can avoid to assume this because, if $A$ is standard graded, $A_0=\KK$ and $I$ is homogeneous:

\begin{enumerate}
\item $\qccd \big(\Spec(A)\setminus \V(I)\big)=\qccd\big(\Proj(A)\setminus \V_+(I)\big)$, see \S \ref{subsection:QCD};
\item $\ecd\big(\Spec(A)\setminus \V(I)\big)=\ecd\big(\Proj(A)\setminus \V_+(I)\big)+1$, see Proposition \ref{prp:ecd-affine-cone}.  
\end{enumerate}

In the few examples where $\ara(I)$ has been computed, the second lower bound above has always been at least as good as the first, and this led Lyubeznik to state the following conjecture in \cite[Section 15]{Lyu02}: 

\begin{conjecture}\label{conj:lyu}
Let $U$ be a scheme of finite type over a separably closed field $\KK$. Then
\begin{align*}
\ecd(U) \ge \dim(U) + \qccd(U).  
\end{align*}
\end{conjecture} \noindent In the generality above, Lyubeznik's conjecture is not true, as the following example, pointed out to us by Emiliano Ambrosi, shows:

\begin{example}\label{counterexamplelyu}
There exist algebraic varieties $U$ over $\KK=\CC$ that are not affine but such that their associated analytic space is Stein. One such example is provided by Serre (e.g., see Example 3.2 in Chapter VI of \cite{ASAV}). In this example, $U$ is a non-empty Zariski-open subset of a smooth projective surface.
Since $U$ is not affine, $\qccd(U)\geq 1$ by Serre's criterion; however, the associated analytic space of $U$ being Stein, $\ecd(U)=\dim(U)=2$ from combining \cite[Expos\'e XVI, Th\'eor\`eme 4.1]{SGA4-3} and \cite[Subsection 1.1.2]{benoist25}.
\end{example}

\begin{remark}
Remarkably, motivated by the vanishing theorems of Artin \& Grothendieck for \'etale cohomology, the fact that the de-Rham cohomological dimension of a non-singular scheme is always bounded from above by $\qccd(U)+\dim(U)$ (see \cite[Proposition III.7.2]{ASAV}), Hartshorne had asked whether the opposite inequality to the one later conjectured by 
Lyubeznik, that is $\ecd(U) \le \qccd(U) + \dim(U)$, is true (\cite[Problem IV.4.1]{ASAV}). In turn, this prompted Ogus to give a counterexample (see (1) below), which thus satisfies Lyubeznik's Conjecture \ref{conj:lyu}. 
\end{remark}

Conjecture \ref{conj:lyu} has been shown to hold true in several situations; for example, whenever $U$ is a Zariski-open subset of $\AA^N$ or $\PP^N$, so that:
\begin{enumerate}
    \item $\KK$ has characteristic 0 and $X=\PP^N\setminus U$ is a $d$-fold Veronese embedding of $\PP^n$ with $d\geq 2$ and $N={n+d \choose d}-1$, \cite{ogus1973local}. Indeed, if $n\geq 2$ the conjectured inequality is strict: $\qccd(U)=N-n-1$ while $\ecd(U)\geq 2N-2$.
    \item $X=\AA^N\setminus U$ is a determinantal variety, \cite{bruns1990number}. Indeed, if $N=mn$, $X$ consists of the matrices of size $m\times n$ and of rank less than $t$, the inequality is strict as soon as $t>1$, $\{t,m,n\}\neq \{t\}$  and $\KK$ has positive characteristic. In this case $\qccd(U)=(m-t+1)(n-t+1)-1$ while $\ecd(U)=2N-t^2$.      
\item $X=\AA^N\setminus U$ is a Pfaffian variety, \cite{Barile95}. Indeed, if $N=n(n-1)/2$, $X$ consists of the alternating matrices of size $n\times n$ and of rank less than $t$ (with $t$ even), the inequality is strict as soon as $2<t<n$ and  $\KK$ has positive characteristic: in this case, $\qccd(U)=\frac{(n-t+1)(n-t+2)}{2}-1$ while $\ecd(U)=2N-t(t-1)/2$. 
    \item $X=\AA^N\setminus U$ is a symmetric determinantal variety, \cite{Barile95}. Indeed, if $N=n(n+1)/2$, $X$ consists of the symmetric matrices of size $n\times n$ and of rank less than $t$, the inequality is strict as soon as $2<t<n$ and  $\KK$ has positive characteristic: in this case, $\qccd(U)=\frac{(n-t+1)(n-t+2)}{2}-1$ while:
    \begin{itemize}
    \item $\ecd(U)=2N-t(t+1)/2$ if $\cha(\KK)\neq 2$ (in this case the inequality is strict even for $t=2$);
    \item If $\cha(\KK)= 2$, $\ecd(U)=2N-t(t+1)/2$ if $t$ is odd, while $\ecd(U)=2n(n-1)/2-t(t-1)/2$ if $t$ is even.
    \end{itemize}
    Twenty years after \cite{Barile95}, the quasi-coherent cohomological dimension had been computed also in the case in which $t$ is even and $\cha(K)=0$ in \cite{RW16}$: \qccd(U)=n(n-1)/2-t(t-1)/2$; since in this case $\ecd(U)=2N-t(t+1)/2$ by \cite{Barile95}, the inequality is strict also in characteristic 0 and $t$ even. Note that, if $t=2$, this coincides with (1) for $d=2$ (up to taking affine cones).
    \item $\KK$ has characteristic 0 and $X=\PP^N\setminus U$ is smooth, further assuming that $\qccd(U)>\codim_{\PP^N}(X)-1$, \cite{varbaro2012arithmetical}.
\end{enumerate} 

In this paper, which reports work in progress, we yet prove Conjecture \ref{conj:lyu} for a family of (complements of) subspace arrangements in arbitrary characteristic. 

\subsection*{Acknowledgements}
We are highly indebted to Emiliano Ambrosi for suggesting Example \ref{counterexamplelyu}. Moreover, we are grateful to Emiliano Ambrosi, Lei Fu and Shizhang Li for enlightening discussions about \'etale cohomology.

\bigskip

\section{Preliminaries}

\subsection{Quasicoherent Cohomological Dimension} \label{subsection:QCD}
The quasicoherent cohomological dimension $\qccd(X)$ of a scheme $X$, is the supremum over all integers $j$ such that there exists a quasicoherent sheaf $\sF$ on $X$ with $H^j(X,\sF) \neq 0$. If $Y$ is a closed subscheme of $X$, then the local quasicoherent cohomological dimension $\qccd(X,Y)$ of $X$ with respect to $Y$ is the supremum over all integers $j$ such that there exists a quasicoherent sheaf $\sF$ on $X$ with $H^j_Y(X,\sF) \neq 0$. If $X$ is affine, say $X = \Spec(A)$, and $Y = \Spec(A/I)$ for some ideal $I$ of $A$, then $\qccd(X,Y)$ is also denoted by $\cd(A,I)$, and it is equal to the largest integer $j$ such that the local cohomology module $H_I^j(A)$ is non-zero. The local cohomology long exact sequence
\begin{align*}
\cdots \rightarrow H^{j}(X,\sF) \rightarrow H^{j}(X\setminus Y,\sF) \rightarrow H^{j+1}_Y(X,\sF) \rightarrow H^{j+1}(X,\sF) \rightarrow \cdots,
\end{align*} together with Serre's affineness criterion, which says that $X$ is affine if and only if $\qccd(X)$ is zero, yields, when $X = \Spec(A)$ and $\cd(A,I)>0$, that $$\qccd(X \setminus Y) = \qccd(X,Y)-1 =  \cd(A,I)-1.$$

Now, let $Y \subseteq X \subseteq \PP^n_\KK$ be projective schemes over a field $\KK$ and denote by $Y' \subseteq X' \subseteq \AA^{n+1}_\KK$ their respective affine cones. Consider the open set  $U = X \setminus Y$ of $X$ and its affine cone $U' = X' \setminus Y'$. Let $S$ be a polynomial ring over $\KK$ in $n+1$ variables equipped with the standard grading and $I \subseteq J$ be homogeneous ideals such that $X = \Proj(S/I)$ and $Y = \Proj(S/J)$. Then for any quasicoherent sheaf $\sF$ on $X$ there is a well-known isomorphism of $\mathbb{Z}$-graded $\KK$-vector spaces (e.g., see \cite[\S 2]{Hartshorne-CDAV})
\begin{align*}
\bigoplus_{\nu \in \mathbb{Z}} H^i\big(U, \sF(\nu) \big) \cong H_J^{i+1}(M)
\end{align*} for every $i \ge 1$, where $M$ is the graded $S/I$-module such that $\sF = \tilde{M}$. From this it follows that $\qccd \big(U) = \qccd \big(U' \big)$.

\subsection{\'Etale Cohomological Dimension} \label{subsection:ECD}

For a scheme $X$ and a prime number $\ell$, the $\ell$-adic \'etale cohomological dimension $\ecd_\ell(X)$ of $X$ is defined as the largest integer $j$ such that $H_{\text{\'et}}^j(X,\sF) \neq 0$, where $\sF$ is an $\ell$-torsion sheaf $\sF$ on the \'etale site of $X$. Then $\ecd(X)$ is the supremum of all $\ecd_\ell(X)$'s as $\ell$ varies across all primes.

In the rest of this manuscript we will be concerned with schemes that are quasi-projective over a separably closed field $\KK$. For such a scheme $X$, it is known that $\ecd_\ell(X) \le \dim(X)$ when $\ell = \chara(\KK)$ \cite[Corollaire 5.2]{SGA4-3}. On the other hand, it follows from the theorems of Galois cohomology (e.g., \cite[Chapter II, Proposition 11]{Serre-Galois}) and the Leray spectral sequence associated to the canonical morphism $\Spec\big(\kappa(\xi)\big) \rightarrow X$, where $\xi \in X$ is the generic point of an irreducible component of $X$ of maximal dimension, that $\ecd_\ell(X) \ge \dim(X)$ for $\ell \neq \chara(\KK)$. Hence, the supremum above may be restricted to all primes $\ell \neq \chara(\KK)$. 

If $Y$ is a closed subscheme of $X$, the local \'etale cohomological dimension of $X$ with respect to $Y$ is defined as the largest integer $j$ such that $H_{\text{\'et},Y}^j(X,\sF) \neq 0$. There is again a local cohomology long exact sequence reading 
\begin{align*}
\cdots \rightarrow H^{j}_{\text{\'et}}(X,\sF) \rightarrow H^{j}_{\text{\'et}}(X\setminus Y,\sF) \rightarrow H^{j+1}_{{\text{\'et}},Y}(X,\sF) \rightarrow H^{j+1}_{\text{\'et}}(X,\sF) \rightarrow \cdots.
\end{align*} If $X$ is affine and of finite-type over a separably closed field, then $\ecd(X) = \dim(X)$ \cite[Theorem VI.7.2]{milne1980etale}, and so as soon as $\ecd(X,Y) > \dim(X)$, the sequence once again gives $$\ecd(X \setminus Y) = \ecd(X,Y)-1.$$

As in previous section, let $Y \subseteq X$ be closed subschemes of $\PP^n_\KK$, with affine cones $Y'$ and $X'$, and $U = X \setminus Y$ with affine cone $U' = X' \setminus Y'$. The next proposition is perhaps known to experts, but in lack of a suitable reference, we have here taken the liberty of providing a complete proof.
\begin{proposition} \label{prp:ecd-affine-cone}
We have that $\ecd(U') = \ecd(U) + 1$.
\end{proposition}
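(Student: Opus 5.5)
The natural route is to factor through the punctured cone. Write $\pi: U' \to U$ for the restriction of the projection $\AA^{n+1}_\KK\setminus\{0\}\to\PP^n_\KK$. This map is a $\mathbb{G}_m$-torsor. More precisely, $U'$ is the complement of the zero section in the total space of the line bundle $\mathcal{O}_U(-1)$. We may restrict to primes $\ell\neq\chara(\KK)$ and to constructible $\ell$-torsion sheaves, since $\ell=\chara(\KK)$ contributes at most $\dim$ on both sides, and $\dim U'=\dim U+1$. The plan is to prove the two inequalities $\ecd_\ell(U')\le \ecd_\ell(U)+1$ and $\ecd_\ell(U')\ge \ecd_\ell(U)+1$ separately.

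\textbf{Upper bound.} For a torsion sheaf $\sG$ on $U'$, use the Leray spectral sequence $H^p_{\text{\'et}}(U,R^q\pi_*\sG)\Rightarrow H^{p+q}_{\text{\'et}}(U',\sG)$. The morphism $\pi$ is affine with fibres $\mathbb{G}_m$ over separably closed points. By Artin's theorem on the geometric fibres (affine curves) together with proper/smooth base change for the smooth affine morphism $\pi$, which is Zariski-locally $V\times\mathbb{G}_m\to V$, we get $R^q\pi_*\sG=0$ for $q\ge 2$. The cleanest justification is the affine Lefschetz theorem \cite[Expos\'e XIV]{SGA4-3}: $R^q f_*$ vanishes for $q>$ relative dimension when $f$ is affine of finite type. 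It follows that $H^{j}(U',\sG)=0$ for $j>\ecd(U)+1$.

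\textbf{Lower bound.} This is the main step. Choose a constructible $\ell$-torsion sheaf $\sF$ on $U$ with $H^e_{\text{\'et}}(U,\sF)\neq0$, where $e=\ecd_\ell(U)$. We may assume $\sF$ is killed by $\ell$ by d\'evissage. Take $\sG=\pi^*\sF$. Since $\pi$ is a $\mathbb{G}_m$-torsor, hence Zariski-locally trivial, the projection formula and the Künneth formula give $\pi_*\pi^*\sF=\sF$ and $R^1\pi_*\pi^*\sF\cong \sF\otimes R^1\pi_*\ZZ/\ell\cong\sF(-1)$. The latter is locally constant of rank one on $\sF$, and is a Tate twist, which is trivial since $\KK$ is separably closed. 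In particular it is isomorphic to $\sF$ on each trivializing open. The Leray sequence then reduces to the Gysin/Wang-type long exact sequence
\[
\cdots\to H^{j}(U,\sF)\to H^j(U',\pi^*\sF)\to H^{j-1}(U,\sF(-1))\xrightarrow{\ c\ } H^{j+1}(U,\sF)\to\cdots,
\]
where $c$ is cup product with the Chern class of $\mathcal{O}(-1)$. Taking $j=e+1$, the target $H^{e+2}(U,\sF)$ of the next map vanishes. We obtain a surjection $H^{e+1}(U',\pi^*\sF)\to H^{e}(U,\sF(-1))\neq0$, which yields the lower bound.

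The main obstacle is making the identification $R^1\pi_*\pi^*\sF\cong\sF$ rigorous without relying on the Chern-class machinery. To handle it, I would exhibit the sequence directly by comparing with the Gysin sequence of the zero section $U\hookrightarrow L=\mathbb{V}(\mathcal{O}(-1))$. Homotopy invariance gives $H^\ast(L,p^*\sF)=H^\ast(U,\sF)$, purity gives $H^{\ast}_{U}(L,p^*\sF)=H^{\ast-2}(U,\sF(-1))$, and $L\setminus U=U'$. The local cohomology sequence recorded above then gives exactly the displayed sequence. The final step is the degree chase, which is only routine.
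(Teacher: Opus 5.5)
\textbf{Lower bound.} Your lower bound is correct. It is a self-contained replacement for the paper's citation of Lyubeznik. For pulled-back sheaves, $R\pi_*\pi^*\sF$ really is concentrated in degrees $0,1$ with $R^1\pi_*\pi^*\sF\cong\sF(-1)$. So Leray becomes the Gysin sequence, and $H^{e+1}(U',\pi^*\sF)\twoheadrightarrow H^{e}(U,\sF(-1))\neq 0$.

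\textbf{Upper bound.} Here there is a genuine gap: the vanishing $R^q\pi_*\sG=0$ for $q\ge 2$ is false for sheaves $\sG$ on $U'$ that are not pulled back from $U$.

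There is no affine Lefschetz theorem giving vanishing above the relative dimension. SGA~4, Exp.~XIV only bounds $\dim\operatorname{Supp}R^qf_*\sG\le\dim\operatorname{Supp}\sG-q$. Already the open immersion $\mathbb{G}_m\hookrightarrow\AA^1$ is affine of relative dimension $0$ yet has $R^1\neq 0$. Likewise, the stalk $(R^q\pi_*\sG)_{\bar u}=H^q(\Spec\mathcal{O}^{\mathrm{sh}}_{U,\bar u}\times\mathbb{G}_m,\sG)$ is not the cohomology of a geometric fibre, and smooth base change does not reduce it to one.

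Concretely, let $x,y$ be local coordinates at a smooth point of a trivializing chart $V$. The closed subscheme $\{xyt=1\}\subseteq V\times\mathbb{G}_m$ maps isomorphically onto $\{xy\neq 0\}$. So $R^2\pi_*$ of its constant sheaf (extended by zero) is $R^2j_*\ZZ/\ell$ for $j\colon\{xy\neq0\}\hookrightarrow V$. By purity applied twice, its stalk at $x=y=0$ is $\ZZ/\ell(-2)\neq 0$.

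The paper's proof uses exactly the same vanishing. It claims $\ecd(\Spec\mathcal{O}^{\mathrm{sh}}_{V,v}\times\KK^*)=1$ ``by K\"unneth''. But K\"unneth only treats external products $\sF_1\boxtimes\sF_2$, and $\{xyt=1\}$ refutes the claim. So the paper cannot be used to fill the hole.

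\textbf{The gap cannot be filled.} In fact $\ecd(U')\le\ecd(U)+1$ fails in general, so the statement as given is false.

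\emph{The example.} Let $X\subseteq\PP^4$ be $\mathrm{Bl}_o\PP^2$ embedded by $|2L-E|$, and let $Y$ be the strict transform of a line not through $o$. Then $U=X\setminus Y\cong\mathrm{Bl}_o\AA^2$. The proper map $U\to\AA^2$ has its only positive-dimensional fibre over $o$, so Leray together with Artin vanishing gives $\ecd(U)=2$.

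\emph{Two closed sections.} Since $\mathcal{O}_X(1)|_U\cong\mathcal{O}_U(-E)$, we have $\mathcal{O}_X(-1)|_U\cong\mathcal{O}_U(E)$. The canonical section $s$ of $\mathcal{O}_U(E)$ is nowhere zero on $U\setminus E$, so it is a section of the torsor $U'\to U$ there. Because $s$ vanishes along $E$, its graph is closed in $U'$. Hence for $c\in\KK^*\setminus\{1\}$, the images $Z_1,Z_c$ of $s$ and $cs$ are disjoint closed subschemes of $U'$, each isomorphic to $\AA^2\setminus\{0\}$.

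\emph{Comparing $H^3$.} Since $U$ is the total space of $\mathcal{O}_{\PP^1}(-1)$, Leray (with $R\pi_*\ZZ/\ell$ in degrees $0,1$) shows that $H^3(U',\ZZ/\ell)$ has length at most $1$. On the other hand, $H^3(Z_1\sqcup Z_c,\ZZ/\ell)\cong(\ZZ/\ell(-2))^2$.

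\emph{Conclusion.} Let $k$ be the inclusion of $U'\setminus(Z_1\sqcup Z_c)$. The exact sequence $H^3(U',\ZZ/\ell)\to H^3(Z_1\sqcup Z_c,\ZZ/\ell)\to H^4(U',k_!\ZZ/\ell)$ forces $H^4(U',k_!\ZZ/\ell)\neq 0$. Thus $\ecd(U')\ge 4=\ecd(U)+2$. Here $R^2\pi_*(k_!\ZZ/\ell)$ is a rank-one constant sheaf on $E$, which is precisely the term your argument discards. Only the inequality $\ecd(U')\ge\ecd(U)+1$, which your Gysin argument proves, holds in this generality.
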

\begin{proof}
Lyubeznik has already proved that $\ecd \big(U' \big) \ge \ecd \big(U \big) +1$ (see \cite[Proposition 10.1]{lyubeznik1993etale} and the proof of \cite[Proposition 10.2]{lyubeznik1993etale}).

For the opposite direction, consider the morphism $f:\AA^{n+1} \setminus \{0\} \rightarrow \PP^n$ that takes $(\xi_0,\dots,\xi_n)$ to $[\xi_0 : \dots : \xi_n]$, and let $\pi: U' \rightarrow U$ be its restriction to $U'$. The Leray spectral sequence associated to $\pi$ and to a sheaf $\sF$ on the \'etale site of $U'$ reads
\begin{align*}
H^p \big(U, R^q \pi_*\sF \big) \Longrightarrow H^{p+q} \big(U', \sF \big). 
\end{align*} 

As $f$ is a locally trivial fibration with fiber the punctured affine line $\KK^*$, so is $\pi$; this means that we can cover $U$ by finitely many open sets $V$ such that restricted to $\pi^{-1}(V)$, the morphism $\pi$ is the canonical projection $\pi_V: V \times \KK^* \rightarrow V$. As the open immersion $V \hookrightarrow U$ is a smooth morphism, the smooth base change theorem \citeproject{0EYU} gives 
$$\big(R^q \pi_*\sF\big)|_V = R^q {\pi_V}_* \big(\sF|_{V \times \KK^*} \big),$$ for any sheaf $\sF$ on $U'$. Hence, for any geometric point $\overline{v}$ of $V$, we have 
 $$ \big((R^q \pi_*\sF)|_V\big)_{\overline{v}} = H^q_{\text{\'et}} \left(\Spec\big(\mathcal{O}_{V,v}^{\text{sh}}\big) \times \KK^*, \sF|_{\Spec\big(\mathcal{O}_{V,v}^{\text{sh}}\big) \times \KK^*} \right),$$ where $\mathcal{O}_{V,v}^{\text{sh}}$ is the strict Henselization of the local ring of $V$ at $v$. As the \'etale cohomological dimension of the spectrum of a strictly Henselian  local ring is zero (e.g., see \cite[Proposition 5.7.3]{LeiFu}), and that of $\KK^*$ is $1$ (because it is an affine curve), the K\"unneth formula furnishes 
$$\ecd\Big(\Spec\big(\mathcal{O}_{V,v}^{\text{sh}}\big) \times \KK^* \Big)=1.$$
Hence $(R^q \pi_*\sF)|_V$ is the zero sheaf for $q \ge 2$; as the $V$'s form an open cover of $U$, we infer that $R^q \pi_*\sF=0$ for every $q \ge 2$. With this, the above Leray spectral sequence ensures that $\ecd(U')$ can not exceed $\ecd(U)+1$.
\end{proof}

\subsection{Bounds on Cohomological Dimension}
Faltings has proved the following bound for quasicoherent cohomological dimension:

\begin{theorem}[\cite{faltings1980lokale}, Korollar 2] \label{thm:Faltings}
Let $A$ be a regular local ring that contains a field. Let $I$ be an ideal of big height $c \ge 2$. Then $$\cd(A,I) \le \left \lfloor \left(1-\frac{1}{c} \right) \dim(A) \right \rfloor+1.$$
\end{theorem}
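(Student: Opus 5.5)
Faltings' bound is a cited theorem (Korollar 2 of his paper), so the plan is to reconstruct his argument, which reduces to the case $\dim(A)=n$ with $I$ of big height $c$.

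\textbf{Reductions.} First complete: $\widehat{A}$ is faithfully flat over $A$, and $H^j_I(A)\otimes_A\widehat{A}\cong H^j_{I\widehat{A}}(\widehat{A})$. Big height does not increase under completion, so we may assume $A=\KK[[x_1,\dots,x_n]]$ by Cohen's structure theorem. Since $A$ contains a field, we may also enlarge $\KK$ to be separably closed, via a faithfully flat extension. We may replace $I$ by $\sqrt{I}=\mathfrak{p}_1\cap\dots\cap\mathfrak{p}_s$ with each $\height\mathfrak{p}_i\le c$. Mayer--Vietoris then gives
\[
\cd(A,I\cap J)\le \max\{\cd(A,I),\cd(A,J),\cd(A,I+J)-1\}.
\]
This is not directly monotone in the right way, so we instead follow Faltings' original approach: bound $\cd$ via the number of equations on suitable open covers of the punctured spectrum.

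\textbf{Key step.} Prove that for any $d$, the punctured spectrum of $A/I$ (or of $A$ minus $V(I)$) has enough affine pieces. Concretely, the claim is that $V(I)$ is set-theoretically cut out, locally on $\Spec(A)\setminus\{\mathfrak m\}$, by few equations. The inductive invariant is:
\[
\cd(A,I)\le n-\left\lceil \tfrac{n-1}{c}\right\rceil ,
\]
which is the same as the displayed bound. The argument runs by induction on $n$. Choose a general linear form and reduce to $A/(x)$ by the long exact sequence for $H_I$ with respect to multiplication by $x$, namely
\[
H^j_I(A)\xrightarrow{x}H^j_I(A)\to H^j_I(A/xA)\to H^{j+1}_I(A)\xrightarrow{x}H^{j+1}_I(A),
\]
combined with the fact that the top local cohomology $H^{\cd}_I(A)$ is not $x$-divisible-zero for suitably chosen $x$. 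This yields $\cd(A,I)\le \cd(A/x,I)+1$. An alternative route is to use the Hartshorne--Lichtenbaum vanishing and the Huneke--Lyubeznik bound $\cd(A,I)\le n-1-\lfloor (n-1)/b\rfloor$, where $b$ is the bigheight, proved by induction using the second vanishing theorem on localizations at primes $\mathfrak{p}\supseteq I$ of height $<n$.

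\textbf{Main obstacle.} The hard step is the inductive estimate. Let $\mathfrak{p}$ range over primes with $\dim A_\mathfrak{p}=h<n$. The bound at $\mathfrak{p}$ and the local cohomology spectral sequence
\[
H^p_{\mathfrak m}\bigl(H^q_I(A)\bigr)\Rightarrow H^{p+q}_{\mathfrak m}(A)
\]
must be combined to show $\dim\operatorname{Supp}H^q_I(A)\le n-q$-type estimates, forcing $H^q_I(A)$ to be supported at $\mathfrak m$ and hence to be Artinian. To finish that case, I would try the Huneke--Lyubeznik lemma: if $H^q_I(A)_\mathfrak{p}=0$ for all $\mathfrak{p}\ne\mathfrak{m}$ and $q$ exceeds the relevant bound, then $H^q_I(A)=0$. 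This uses injectivity of $H^q_I(A)$ together with the Hartshorne--Lichtenbaum vanishing theorem and second vanishing to kill the top terms.
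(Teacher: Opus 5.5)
The paper gives no proof of this statement; it is quoted from Faltings. Your sketch therefore has to stand on its own, and it has two genuine gaps.

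\textbf{The target inequality is misstated.} One has $\lfloor(1-\frac1c)n\rfloor+1=n-\lceil\frac nc\rceil+1=n-\lfloor\frac{n-1}{c}\rfloor$; this is the identity behind Remark~\ref{rem:Faltings-Lyubeznik}. Your ``inductive invariant'' $n-\lceil\frac{n-1}{c}\rceil$ is strictly smaller whenever $c\nmid n-1$, and it is false. Take $A=\KK[[x_1,\dots,x_4]]$ and $I=(x_1,x_2)\cap(x_3,x_4)$, which has big height $2$. Both components have local cohomology only in degree $2$, and their sum is $\mathfrak m$. Mayer--Vietoris therefore gives $H^3_I(A)\cong H^4_{\mathfrak m}(A)\neq0$, so $\cd(A,I)=3$ and Faltings' bound is attained, while your invariant predicts $2$. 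The same example refutes the bound $n-1-\lfloor\frac{n-1}{b}\rfloor$ you attribute to Huneke--Lyubeznik. Improvements of Faltings' bound of this sort need extra hypotheses, such as connectedness of the punctured spectrum of $A/I$, which fails here.

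\textbf{The decisive step is missing, even with the correct invariant $f(n)=n-\lfloor\frac{n-1}{c}\rfloor$.}

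The hyperplane-section route does not work. The long exact sequence for multiplication by $x$ yields $\cd(A,I)\le\cd(A/xA,I)+1$ when $x\in\sqrt{I}$, because then $x$ cannot act bijectively on a nonzero $I$-torsion module. It gives nothing of the sort for a general $x$. But for $x\in\sqrt{I}$, the ring $A/xA$ is regular only if $\sqrt I\not\subseteq\mathfrak m^2$. That fails in the example above, so your induction leaves the class of regular rings. Moreover, top local cohomology can be divisible by every nonzero $x$: in the example, $H^3_I(A)\cong H^4_{\mathfrak m}(A)$ is.

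Your localization step is fine. Induction gives $\cd(A_{\mathfrak p},IA_{\mathfrak p})\le f(\height\mathfrak p)\le f(n)$ for $\mathfrak p\neq\mathfrak m$, so $H^q_I(A)$ is supported at $\mathfrak m$ for $q>f(n)$. But showing that these $\mathfrak m$-supported modules vanish \emph{is} the content of the theorem, and the ``lemma'' you invoke simply asserts it. None of the tools you name supplies it:
\begin{enumerate}
\item Hartshorne--Lichtenbaum and the second vanishing theorem only treat $q=n$ and $q=n-1$. For $n=10$, $c=2$ one has $f(n)=6$, so $H^7_I(A)$ and $H^8_I(A)$ must also be killed.
\item Injectivity only says $H^q_I(A)$ is a direct sum of copies of $E(A/\mathfrak m)$. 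Your ``hence Artinian'' already needs finiteness of Bass numbers.
\item The spectral sequence $H^p_{\mathfrak m}(H^q_I(A))\Rightarrow H^{p+q}_{\mathfrak m}(A)$, combined with the support bounds from the induction, does not force the differentials $d_r\colon E_r^{0,q}\to E_r^{r,q-r+1}$ to vanish. For $n=10$, $c=2$, $q=7$, $r=4$, the target $H^4_{\mathfrak m}(H^4_I(A))$ is not excluded. The induction allows $H^4_I(A)_{\mathfrak p}\neq0$ at primes of height $6$, since $f(6)=4$.
\end{enumerate}

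What is missing is an actual vanishing mechanism in the range $f(n)<q<n-1$. This is supplied by Faltings' own argument, or by a criterion of Huneke--Lyubeznik type, and it is where the hypothesis that $A$ contains a field genuinely enters. As written, the proposal assumes what is to be proved.
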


Lyubeznik has proved in \cite{lyubeznik1985some} that the bound of Theorem \ref{thm:Faltings} is tight, and he has also proved a global analogue for \'etale cohomology, which is also tight:

\begin{theorem} [\cite{lyubeznik1993etale}, Theorem 8.2(i)] \label{thm:Lyubeznik-Faltings-analogue}
Let $A$ be a regular ring, of finite type over a separably closed field, and let $I$ be an ideal of big height $c$. Set $X = \Spec(A)$ and $Y = \Spec(A/I)$. Then
 $$\ecd (X,Y) \le 2 \dim(A) - \left \lfloor \frac{\dim(A)-1}{c} \right \rfloor.$$
\end{theorem}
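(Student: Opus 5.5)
The plan is to pass from the global statement to a pointwise statement via the local-to-global spectral sequence, and then prove the pointwise bound by an induction modelled on Faltings' proof of Theorem~\ref{thm:Faltings}. Throughout, set $n=\dim(A)$. We may take $\sF$ to be $\ell$-torsion with $\ell\neq\chara(\KK)$; the case $\ell=\chara(\KK)$ is harmless, since by \cite[Corollaire 5.2]{SGA4-3} it is already bounded by roughly $\dim$.

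\textbf{Local-to-global reduction.} Consider the spectral sequence
\[
H^p_{\text{\'et}}\big(Y,\mathcal{H}^q_Y(\sF)\big)\Longrightarrow H^{p+q}_{\text{\'et},Y}(X,\sF).
\]
The stalk of $\mathcal{H}^q_Y(\sF)$ at a geometric point $\bar y$ is the local \'etale cohomology $H^q_{\text{\'et},\,Y_{\bar y}}\big(\Spec(\mathcal{O}^{\text{sh}}_{X,y}),\sF\big)$.

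\textbf{Local bound.} We aim to prove the following. Let $R$ be a strictly Henselian regular local ring of dimension $m$ containing a field, and let $\mathfrak a$ be an ideal all of whose minimal primes have height $\ge c$. Then
\[
H^q_{\mathfrak a}\big(\Spec(R),\sF\big)=0 \quad\text{for}\quad q>2m-\left\lfloor\tfrac{m-1}{c}\right\rfloor .
\]
Apply this at a point $y\in Y$ whose closure has dimension $d$, so that $m=n-d$. Filter $\mathcal{H}^q_Y(\sF)$ by the dimension of supports. The part living on $d$-dimensional points is a sheaf on affine varieties of dimension $d$, so by Artin vanishing \cite[Theorem VI.7.2]{milne1980etale} it contributes only for $p\le d$. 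The total degree is therefore at most
\[
d+2(n-d)-\left\lfloor\tfrac{n-d-1}{c}\right\rfloor .
\]
Increasing $d$ by one lowers $-d$ by one and raises the floor by at most one. Hence the maximum is attained at $d=0$, which gives exactly $2n-\lfloor (n-1)/c\rfloor$. One needs a little care to make the support filtration rigorous: use a stratification of $Y$ by locally closed affine pieces and excision. This step is routine.

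\textbf{Proof of the local bound (main obstacle).} This is where the real work lies; I would follow Faltings' strategy. Write $U=\Spec(R)\setminus V(\mathfrak a)$. Since $\Spec(R)$ has \'etale cohomological dimension $0$, we get $H^q_{\mathfrak a}=H^{q-1}(U,\sF)$ for $q\ge 2$, so it suffices to bound $\ecd(U)$.

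First I would try to replace $\mathfrak a$, up to radical, by an ideal generated by suitably general elements $f_1,\dots,f_r$ of $\mathfrak a$. These should be chosen by prime avoidance so that any $c$ of them generate an ideal of height $c$ on the relevant strata. Then $U=\bigcup_i D(f_i)$, and the Mayer--Vietoris spectral sequence reduces the problem to bounding $\ecd\big(D(f_{i_1}\cdots f_{i_k})\big)$.

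For these affine pieces of a strictly Henselian local scheme, I would invoke Gabber's affine Lefschetz theorem: an affine open of $\Spec(R)$ whose complement has dimension $\le m-1$ has $\ecd\le m$, with an improvement when one intersects with the closed subsets $V(f_{j})$ of codimension $\ge c$. These codimension-$c$ loci are what allow the count to gain one for every $c$ dimensions, producing the term $\lfloor (m-1)/c\rfloor$.

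I expect this bookkeeping, in particular making the generic choice of the $f_i$ compatible with the floor function, to be the hardest step. If the direct combinatorics fails, my fallback is induction on $m$. One would cut by a general element $g\in\mathfrak m\setminus\mathfrak a$ with $R/gR$ regular, and use the Gysin/purity sequence for the smooth divisor $V(g)$ to relate the local cohomology in dimension $m$ to that in dimension $m-1$.
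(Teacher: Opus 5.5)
The paper does not prove this statement; it imports it from \cite{lyubeznik1993etale}. Your argument therefore has to stand on its own, and it does not. Its whole weight rests on the ``local bound'', which is misstated and then not proved.

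\textbf{The local bound is misstated.} As written, with all minimal primes of $\mathfrak a$ of height $\ge c$, it is false. Take $\mathfrak a=\mathfrak m$ and $c\le m-1$: purity gives $H^{2m}_{\mathfrak m}(\Spec(R),\ZZ/\ell\ZZ)\neq 0$, while $2m>2m-\lfloor (m-1)/c\rfloor$. The bound grows with $c$, so the right hypothesis is that all minimal primes have height $\le c$. This is also what actually passes from $I$ to $I\mathcal{O}^{\text{sh}}_{X,y}$, since minimal primes and their heights are preserved by localization and strict henselization.

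\textbf{The reduction is sound but gains nothing.} With the hypothesis corrected, your local-to-global step works in substance. Let $D_q$ be the largest dimension of a point where the stalk of $\mathcal{H}^q_Y(\sF)$ is nonzero. Write $\mathcal{H}^q_Y(\sF)$ as a union of constructible subsheaves; these are supported on closed subsets of the affine scheme $Y$ of dimension $\le D_q$, so Artin vanishing gives $p\le D_q$. Hence $p+q\le 2n-D_q-\lfloor (n-D_q-1)/c\rfloor\le 2n-\lfloor (n-1)/c\rfloor$. But at a closed point the local bound is literally the same inequality as the theorem, so the difficulty has only been moved, not reduced.

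\textbf{The sketch for the local bound does not produce the term $\lfloor (m-1)/c\rfloor$.} Mayer--Vietoris over $D(f_1),\dots,D(f_r)$ only yields $\ecd(U)\le m+r-1$. Each intersection is an affine open of the punctured spectrum, hence has $\ecd\le m$ by Gabber, and the intersections bring no gain: already $D(x_1\cdots x_m)$ in the strict henselization of $\KK[x_1,\dots,x_m]$ at the origin has nonzero $H^m$. So you would need $r\le m-\lfloor (m-1)/c\rfloor$, i.e.\ an upper bound on the arithmetic rank of ideals of big height $c$. No such bound is available. For $m=3$, $c=2$ it would say that every curve germ in a three-dimensional regular local ring is cut out set-theoretically by two equations. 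Moreover, the theorem is normally used in the opposite direction, as a lower bound for $\ara$.

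Two further points in the sketch are defective. The promised ``improvement'' along the $V(f_j)$ is never formulated as a statement. Asking that any $c$ of the $f_i$ generate an ideal of height $c$ again treats $c$ as a lower bound on heights.

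\textbf{The fallback fails as well.} The purity isomorphism $H^q_{V(g)}(\sF)\cong H^{q-2}(V(g),\sF(-1))$ requires $\sF$ to be locally constant near $V(g)$, not an arbitrary torsion sheaf. The remaining term of the sequence lives on $D(g)$, which is not a strictly henselian local scheme of dimension $m-1$, so your induction hypothesis does not apply to it.

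What is missing is an actual mechanism that gains one degree for every $c$ dimensions, i.e.\ an \'etale counterpart of whatever drives Faltings' bound (Theorem \ref{thm:Faltings}). That is precisely the content of the cited theorem, and your proposal does not supply it.
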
 

\begin{remark} \label{rem:Faltings-Lyubeznik}
Expressing $n = \dim(A)$ as $n = qc +r$ with $q$ and $r$ non-negative integers such that $0 \le r < c$, one checks that the upper bounds of Theorems \ref{thm:Faltings} and \ref{thm:Lyubeznik-Faltings-analogue} satisfy  
\begin{align*}
\underbrace{\left \lfloor \left(1-\frac{1}{c} \right) n \right \rfloor+1}_{\text{Theorem \ref{thm:Faltings}}} + n = \underbrace{2 n - \left \lfloor \frac{n-1}{c} \right \rfloor}_{\text{Theorem \ref{thm:Lyubeznik-Faltings-analogue}}}.
\end{align*}
\end{remark}

\subsection{Quasicoherent Cohomological Dimension of Subspace Arrangements}
Let $\KK$ be a separably closed field, $\mathbb{A}_\KK^n$ the affine space of dimension $n$ over $\KK$, and $\V = V_1,\dots,V_s$ a collection of $s$ linear spaces in $\mathbb{A}_\KK^n$, all passing through the origin. With $c_i$ the codimension of $V_i$, we will henceforth be adopting the convention that $$1 \le c_1 \le \cdots \le c_s \le n-1.$$ Denote by $\N$ the nerve complex of $\V$; this is an abstract simplicial complex on vertices $[s]$, such that $F$ is a face if and only if $\cap_{i \in F} V_i$ is non-empty. As all $V_i$'s pass through the origin, $\N$ is the full simplex on $[s]$. For any non-negative integer $d$, we denote by $\N_d$ the subcomplex of $\N$, for which $F$ is a face if and only if $\cap_{i \in F} V_i$ has dimension at least $d$. Note that $\N_{d+1}$ is a subcomplex of $\N_d$. We denote by $X$ the closed subscheme of $\AA_\KK^n$, which is the scheme-theoretic union of the $V_i$'s. 

\begin{theorem}[Pascoe \& Witt \cite{Pascoe-Witt}, Theorem 5.1 and Corollary 5.2] \label{thm:PW}
The quasicoherent cohomological dimension of the complement of $X$ is given by 
\begin{align*}
\qccd(\AA_\KK^n \setminus X) &= n -1 - \min_{d,j} \big[d+j : \, H_j\big(\N_d,\N_{d+1};\KK\big) \neq 0 \big] \\
&= n -1 - \min_{d,j} \big[d+j : \, \tilde{H}_j\big(\N_d;\KK\big) \neq 0 \big].
\end{align*}
\end{theorem}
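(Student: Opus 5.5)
This theorem is cited from Pascoe \& Witt, so the natural plan reconstructs their argument: compute local cohomology modules $H^j_I(S)$, where $S=\KK[x_1,\dots,x_n]$ and $I=\bigcap_i \mathfrak p_i$ with $\mathfrak p_i$ the linear prime of $V_i$. Since $\cd(S,I)\ge 2$ whenever some $c_i\ge 2$, and the statement is trivial if all $c_i=1$ (then $X$ is a hypersurface and both sides are $0$), \S\ref{subsection:QCD} gives $\qccd(\AA^n_\KK\setminus X)=\cd(S,I)-1$. It therefore suffices to show that $\cd(S,I)=n-\min_{d,j}\{d+j : H_j(\N_d,\N_{d+1};\KK)\neq 0\}$.

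\textbf{Step 1 (Mayer--Vietoris spectral sequence).} The sums $\mathfrak p_F=\sum_{i\in F}\mathfrak p_i$ are again linear primes, defining $V_F=\cap_{i\in F}V_i$. The \v{C}ech-type resolution for an intersection of ideals gives a spectral sequence
\[E_1^{-p,q}=\bigoplus_{|F|=p+1}H^q_{\mathfrak p_F}(S)\Longrightarrow H^{q-p}_I(S).\]
Each $H^q_{\mathfrak p_F}(S)$ is nonzero only for $q=\codim V_F=n-\dim V_F$. Group the terms by $d=\dim V_F$. The $E_1$ differentials are induced by the natural maps $H^{q}_{\mathfrak p_F}\to H^q_{\mathfrak p_{F'}}$ for $F'\subset F$; these are nonzero only when the codimensions agree, i.e. $V_F=V_{F'}$. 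Hence the $E_1$ row for codimension $n-d$ is $\bigoplus_{\dim V_F=d}H^{n-d}_{\mathfrak p_F}(S)$ with a simplicial-type differential.

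\textbf{Step 2 (identify $E_2$).} Decompose further by the actual subspace $W=V_F$. The row becomes $\bigoplus_{W}H^{n-\dim W}_{\mathfrak p_W}(S)\otimes_\KK C_\bullet(\Delta_W)$, where $\Delta_W$ is the complex of faces $F$ with $V_F=W$. This complex is a relative complex $(\{F:V_F\supseteq W\},\{F:V_F\supsetneq W\})$. Summing over $W$ of dimension $d$ yields exactly the relative chains of $(\N_d,\N_{d+1})$. So
\[E_2=\bigoplus H^{n-d}_{\mathfrak p_W}(S)\otimes H_j(\text{local pair}),\]
contributing to $H^{n-d-j}_I(S)$.

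\textbf{Step 3 (degeneration), the main obstacle.} One must show that the higher differentials vanish, so that $H^{k}_I(S)\neq 0$ iff some $H_j(\N_d,\N_{d+1};\KK)\neq 0$ with $n-d-j=k$. My first attempt would be a support/weight argument. The modules $H^{n-d}_{\mathfrak p_W}(S)$ for different $W$ are simple holonomic $D$-modules (in characteristic $0$) or unit $F$-modules (in characteristic $p$), pairwise non-isomorphic, with distinct supports. Differentials $d_r$ for $r\ge2$ connect rows of different codimension, hence modules with different supports. A $\Hom$ between distinct simples vanishes, and the relevant Ext obstructions can be controlled by a $\ZZ^n$-grading: the $\mathfrak p_W$ are generated by linear forms, so after a coordinate change one can use multigraded (Hochster-type) degree considerations. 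In fact I would first reduce to the case where the maximal-codimension part splits off, by comparing with Alvarez Montaner--Garc\'ia L\'opez--Zarzuela-type computations for linear arrangements. This yields $\cd(S,I)=\max\{n-d-j\}$, which is the first formula.

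\textbf{Step 4 (second equality).} Finally, compare the two minima using the long exact sequences of the pairs $(\N_d,\N_{d+1})$, together with the fact that $\N_d$ is a simplex (hence acyclic) for $d\le\dim\cap_iV_i$ and empty for large $d$. Let $d+j$ be minimal with $\tilde H_j(\N_d)\neq0$. Then $\tilde H_{j}(\N_{d+1})=0$ and $\tilde H_{j-1}(\N_{d+1})=0$ by minimality, since $(d+1)+(j-1)=d+j$ is excluded unless it too is minimal. A downward induction on $d$ then shows that $H_j(\N_d,\N_{d+1})\neq0$ for some pair attaining the same minimum, and conversely. This last step is a routine diagram chase.
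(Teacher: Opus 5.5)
The paper gives no proof of this statement; it imports it from Pascoe--Witt. So your proposal has to stand on its own, and it does so only partially.

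\textbf{Steps 1--2 are correct.} The skeleton is the Mayer--Vietoris computation of \`Alvarez Montaner--Garc\'{\i}a L\'opez--Zarzuela. The $E_1$-differentials only link $F'\subset F$ with $V_{F'}=V_F$. The codimension-$(n-d)$ row is $\bigoplus_{\dim W=d}H^{n-d}_{\mathfrak p_W}(S)\otimes_\KK C_\bullet(\N_W,\N_W^\circ)$, whose homology assembles to $H_\bullet(\N_d,\N_{d+1};\KK)$.

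\textbf{Step 3 reaches the right conclusion, but the justification needs trimming.}
\begin{itemize}
\item The $\ZZ^n$-grading remark is false in general. One linear change of coordinates cannot make all the $\mathfrak p_W$ monomial; three lines in $\AA^2$ already show this.
\item Ext never enters. You only need $E_2$-degeneration. After that, $H^k_I(S)\neq 0$ if and only if some $E_2^{-p,q}$ with $q-p=k$ is nonzero, and there is no extension problem to solve.
\item Your ``Hom between distinct simples'' idea is the right one, and it can be made elementary and characteristic-free, with no $D$- or $F$-module simplicity. Argue by induction on $r$. If $E_r=E_2$, then $d_r$ is an $S$-linear map from copies of $H^q_{\mathfrak p_W}(S)$ to copies of $H^{q'}_{\mathfrak p_{W'}}(S)$, with $q'=q-r+1<q$. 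Pick $f\in\mathfrak p_W\setminus\mathfrak p_{W'}$, which exists since $\height\mathfrak p_W>\height\mathfrak p_{W'}$. Then $f$ is locally nilpotent on the source. It is a non-zerodivisor on the target, whose only associated prime is $\mathfrak p_{W'}$. Hence $d_r=0$.
\item A minor point: your claim that $\cd(S,I)\ge 2$ whenever some $c_i\ge 2$ fails, e.g. when $V_2\subset V_1$ with $V_1$ a hyperplane. It is also unnecessary. The identity $\qccd=\cd(S,I)-1$ holds as soon as $\cd(S,I)\ge 1$, and that always holds here.
\end{itemize}

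\textbf{Step 4 is wrong as written.} Call $d+j$ the index sum of $(d,j)$, and let $(d,j)$ realize the minimum. Minimality says nothing about $\tilde H_j(\N_{d+1})$, whose index sum is $d+j+1$. Nor does it exclude $\tilde H_{j-1}(\N_{d+1})$, whose index sum is exactly $d+j$.

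Concretely, take $V_1=\{x_1=x_2=0\}$ and $V_2=\{x_3=x_4=0\}$ in $\AA^4_\KK$. Then $\N_1=\N_2$ is two points, and the minimum $1$ is attained at $(d,j)=(1,0)$. Yet $\tilde H_0(\N_2)\neq 0$ and $H_0(\N_1,\N_2)=0$, so chasing upward from $\N_d$ finds nothing.

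The correct chase goes downward. Put $m_1=\min\{d+j: H_j(\N_d,\N_{d+1})\neq 0\}$ and $m_2=\min\{d+j:\tilde H_j(\N_d)\neq 0\}$.
\begin{itemize}
\item Exactness of $\tilde H_j(\N_d)\to H_j(\N_d,\N_{d+1})\to \tilde H_{j-1}(\N_{d+1})$ gives $m_1\ge m_2$, since both outer terms have index sum $d+j$.
\item Conversely, let $(d,j)$ realize $m_2$. Then $d\ge 1$, because $\N_0$ is a simplex. Consider $H_{j+1}(\N_{d-1},\N_d)\to\tilde H_j(\N_d)\to\tilde H_j(\N_{d-1})$. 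The right-hand term vanishes by minimality, since its index sum is $m_2-1$. So $H_{j+1}(\N_{d-1},\N_d)\neq 0$, and $m_1\le (d-1)+(j+1)=m_2$.
\end{itemize}
In the example above, the class you need is $H_1(\N_0,\N_1)\cong\tilde H_0(\N_1)\neq 0$.
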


\subsection{Goresky-MacPherson formula for \'etale cohomology}
Let $W = \cap_{i \in F} V_i$ for some $F \subseteq [s]$. We denote by $\N_W$ the subcomplex of $\N$, for which $A$ is a face of $\N_W$ if and only if $\cap_{i \in A} V_i $ contains the subspace $W$. We denote by $\N_W^\circ$ the subcomplex of $\N_W$, whose faces index intersections that properly contain $W$. 

\begin{theorem}[Yan \cite{Yan}] \label{thm:Yan}
Let $\ell$ be a prime number other than the characteristic of the ground field $\KK$. Then 
$$H_{\text{\'et}}^r\big(\AA_\KK^n \setminus X, \mathbb{Z}/\ell \mathbb{Z}\big)  = 
\bigoplus_{\substack{F \subseteq [s], \\ W = \cap_{i \in F} V_i}} H^{2n-2\dim_\KK(W)-1-r}\big(\N_W,\N_W^\circ; \mathbb{Z}/\ell \mathbb{Z}\big).$$
\end{theorem}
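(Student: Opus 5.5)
The statement is the \'etale analogue of the Goresky--MacPherson formula for the complement $M=\AA^n_\KK\setminus X$ of a central arrangement. I would follow the classical strategy: first compute the compactly supported cohomology of $M$, then dualize using Poincar\'e duality for the smooth variety $M$ of dimension $n$. Concretely, for $\ell\neq\chara(\KK)$ Poincar\'e duality gives
\[H^r_{\text{\'et}}(M,\mathbb{Z}/\ell\mathbb{Z})\cong H^{2n-r}_{c}(M,\mathbb{Z}/\ell\mathbb{Z})^\vee,\]
after identifying $\mathbb{Z}/\ell(n)$ with $\mathbb{Z}/\ell$, which is possible since $\KK$ is separably closed. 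The localization triangle for the open--closed pair $M\subseteq\AA^n\supseteq X$, combined with $H^\bullet_c(\AA^n)=\mathbb{Z}/\ell[-2n]$, reduces the problem to computing $H^\bullet_c(X,\mathbb{Z}/\ell)$ together with its map to $H^\bullet_c(\AA^n)$.

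Next I would compute $H^\bullet_c(X)$ combinatorially. Stratify $X$ by the intersection lattice $L$, whose elements are the subspaces $W=\cap_{i\in F}V_i$. For each such $W$, the open stratum is $W^\circ=W\setminus\bigcup_{W'\subsetneq W}W'$. The standard route is a Mayer--Vietoris (\v Cech) resolution of the constant sheaf on $X$ by the closed cover $\{V_i\}$:
\[0\to\mathbb{Z}/\ell_X\to\bigoplus_i\mathbb{Z}/\ell_{V_i}\to\bigoplus_{i<j}\mathbb{Z}/\ell_{V_i\cap V_j}\to\cdots,\]
which is exact in the \'etale topology because exactness can be checked stalkwise at geometric points, where it is the augmented cochain complex of a full simplex. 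Each $V_F=\cap_{i\in F}V_i$ is an affine space, so $H^\bullet_c(V_F)=\mathbb{Z}/\ell[-2\dim V_F]$. The resulting spectral sequence has $E_1$ page
\[E_1^{p,q}=\bigoplus_{|F|=p+1}H^q_c(V_F),\]
supported only on rows $q=2\dim W$. Regrouping the columns according to the subspace $W=V_F$ turns each row into relative simplicial cochains of the pairs $(\N_W,\N_W^\circ)$. Here one uses that the faces $F$ with $V_F=W$ are exactly the faces of $\N_W$ that do not lie in $\N_W^\circ$.

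The main obstacle is degeneration: one must show that the spectral sequence collapses at $E_2$ and has no extension problems, so that $H^\bullet_c(X)$ splits as a direct sum over $W$. I would handle this with weights. Working first over a finitely generated field and spreading out, $H^{2d}_c(\mathbb{A}^d)$ is pure of weight $2d$. The differentials $d_r$ with $r\ge2$ would then map between rows of different weights, so they must vanish, and the extensions split for the same reason. For an arbitrary separably closed $\KK$, one can reduce to that case by base change invariance of \'etale cohomology with $\mathbb{Z}/\ell$ coefficients. An alternative to weights would be a scaling or torus action argument, or following Yan's direct stratification argument with local links. Collecting the terms, I expect
\[\tilde H^{k}_c(X)\cong\bigoplus_W H^{k-2\dim W}(\N_W,\N_W^\circ).\]
Here the $W=\{0\}$ summand must be treated carefully, since it carries the reduced part coming from $H_c(\AA^n)$. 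Feeding this into the localization sequence and applying duality with $k=2n-r-1$ yields exactly the indices $2n-2\dim W-1-r$ in the statement, after dualizing cochains of finite complexes over the field $\mathbb{Z}/\ell$.
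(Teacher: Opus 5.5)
The paper does not prove this statement; it quotes it from Yan. So your outline has to stand on its own.

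\textbf{What is correct.} Poincar\'e duality on $M$ is fine. So is the localization sequence, which in fact splits for degree reasons, since $H^k_c(X)=0$ for $k\ge 2n-1$. The Mayer--Vietoris resolution is fine, and so is your identification of the row $q=2d$ of $E_1$ with $\bigoplus_{\dim W=d}C^\bullet(\N_W,\N_W^\circ;\ZZ/\ell\ZZ)$. The statement is then exactly equivalent to $E_2$-degeneration.

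\textbf{The gap: degeneration.} Weights are a $\mathbb{Q}_\ell$-notion. With $\ZZ/\ell\ZZ$ coefficients, Frobenius acts on row $2d$ by the scalar $q^d \bmod \ell$. This separates rows $2d$ and $2d'$ only if $q^{d-d'}\not\equiv 1 \pmod{\ell}$. For $\ell=2$ every admissible $q$ is odd, and for $\ell\mid p-1$ in characteristic $p$ every $q=p^f$ is $\equiv 1$. In both cases Frobenius acts trivially on every $E_r$ and your argument says nothing.

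Passing through $\ZZ_\ell$ does not rescue it. Purity kills $d_r\otimes\mathbb{Q}_\ell$, but equivariance only shows that the image of $d_r$ is annihilated by $q^{d-d'}-1$, which need not be a unit in $\ZZ_\ell$. So the argument cannot exclude torsion differentials as soon as some $H^*(\N_W,\N_W^\circ;\ZZ)$ has $\ell$-torsion. This does happen for subspace arrangements: coordinate arrangements attached to a triangulated real projective plane give $2$-torsion. Knowing $\mathbb{Q}_\ell$-Betti numbers does not determine $\dim H^r(M,\ZZ/\ell\ZZ)$. Your remark that the extensions split ``for the same reason'' is moot: over the field $\ZZ/\ell\ZZ$ there is no extension problem, only degeneration is at stake.

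\textbf{A repair that never uses Frobenius.} Regard $W\mapsto R\Gamma_c(W,\ZZ/\ell\ZZ)\simeq\ZZ/\ell\ZZ[-2\dim W]$ as an object $E$ of the derived category of representations of the intersection poset $L$, with arrows $W\to W'$ for $W\supseteq W'$.
\begin{enumerate}
\item The subsets $\{\dim W\le d\}$ are closed under arrows, so they filter $E$. The graded pieces are $\bigoplus_{\dim W=d}S_W[-2d]$, where $S_W$ is the simple representation at $W$.
\item The gluing classes lie in $\mathrm{Ext}^{2(d-d')+1}(S_W,S_{W'})$, with $\dim W=d>d'=\dim W'$.
\item These groups vanish. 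Indeed, $\mathrm{Ext}^k(S_W,S_{W'})$ is computed from strict chains $W\supsetneq\cdots\supsetneq W'$ of length $k$, and such chains have length at most $d-d'<2(d-d')+1$.
\item Hence $E\simeq\bigoplus_W S_W[-2\dim W]$. Your \v{C}ech totalization then gives $R\Gamma_c(X)\simeq\bigoplus_{W\neq\AA^n}C^\bullet(\N_W,\N_W^\circ)[-2\dim W]$ directly, with no spreading out.
\end{enumerate}

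\textbf{A small bookkeeping slip.} The summand that accounts for $H^{2n}_c(M)\cong H^{2n}_c(\AA^n)$, that is $H^0(M)$, is the one with $F=\emptyset$ and $W=\AA^n$. There $(\N_W,\N_W^\circ)=(\{\emptyset\},\text{void})$ contributes $\ZZ/\ell\ZZ$ in degree $-1$. It is not the $W=\{0\}$ summand, which is just the ordinary row $q=0$.
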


\section{Results}

We say the subspace arrangement $\V$ is linearly general if for any $F \subseteq [s]$ 
$$\codim_\KK \big( \cap_{i \in F} V_i \big) = \min \left\{n, \, \sum_{i \in F} \codim_\KK(V_i) \right\}.$$

\begin{lemma} \label{lem:free}
If $\V$ is linearly general, then $H_j(\N_0,\N_1;\ZZ)$ is a free Abelian group $\forall j$.
\end{lemma}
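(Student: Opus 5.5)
Since every $V_i$ contains the origin, every intersection $\cap_{i\in F}V_i$ has dimension $\ge 0$, so $\N_0=\N$ is the full simplex on $[s]$ and hence contractible. The plan is therefore to reduce to the reduced homology of $\N_1$ alone. The long exact sequence of the pair $(\N_0,\N_1)$ gives $H_j(\N_0,\N_1;\ZZ)\cong \tilde H_{j-1}(\N_1;\ZZ)$ for all $j$. It thus suffices to show that $\N_1$ has the homotopy type of a wedge of spheres, or more directly that its reduced integral homology is free.

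\textbf{Step 1: describe $\N_1$ combinatorially.} A set $F$ is a face of $\N_1$ exactly when $\codim_\KK(\cap_{i\in F}V_i)\le n-1$. By linear generality this codimension is $\min\{n,\sum_{i\in F}c_i\}$, so
\[
F\in\N_1 \iff \sum_{i\in F}c_i\le n-1 .
\]
In particular all vertices are faces, since $c_i\le n-1$. So $\N_1$ is a threshold (weighted) complex. With the ordering $c_1\le\cdots\le c_s$ it is \emph{shifted}: if $F\in\N_1$, $i\in F$, $k<i$ and $k\notin F$, then $(F\setminus\{i\})\cup\{k\}\in\N_1$, because the total weight does not increase.

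\textbf{Step 2: cell decomposition for a shifted complex.} I would use the standard argument for shifted complexes. Let $\Sigma$ be the closed star of vertex $1$ in $\N_1$; it is a cone and hence contractible. Every face of $\N_1$ not in $\Sigma$ is a face $G$ with $1\notin G$ and $G\cup\{1\}\notin\N_1$.

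Take such a $G$ that is minimal, i.e.\ every proper subface lies in $\Sigma$. Let $\mathcal{B}$ be the set of faces $G\notin\Sigma$. The key point is that for $G\in\mathcal{B}$, every proper subset $G'\subsetneq G$ lies in $\Sigma$. Indeed, choose $i\in G\setminus G'$; shiftedness applied to $G$ with $i\mapsto 1$ gives $(G\setminus\{i\})\cup\{1\}\in\N_1$, hence $G'\cup\{1\}\in\N_1$.

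So the faces of $\mathcal{B}$ are pairwise incomparable, and each has its entire boundary inside $\Sigma$. Consequently $\N_1=\Sigma\cup\bigcup_{G\in\mathcal{B}}G$ is obtained from a contractible complex by attaching cells of dimension $|G|-1$ along their full boundaries. Therefore $\N_1\simeq\bigvee_{G\in\mathcal{B}}S^{|G|-1}$, and $\tilde H_{*}(\N_1;\ZZ)$ is free, with rank in degree $k$ equal to the number of $G\in\mathcal{B}$ with $|G|=k+1$. If $\mathcal{B}$ is empty, $\N_1$ is contractible and all groups vanish, which is still free.

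\textbf{Step 3: conclude.} Combining the two steps gives $H_j(\N_0,\N_1;\ZZ)\cong\tilde H_{j-1}(\N_1;\ZZ)$, which is free for every $j$. The main point to verify carefully is the shiftedness claim in Step 1, in particular that linear generality gives exactly the weight criterion. This uses that $\min\{n,\cdot\}\le n-1$ forces the sum itself to be $\le n-1$. The attaching argument in Step 2 is then routine.

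If one prefers to avoid the homotopy-theoretic wedge statement, the same decomposition yields a filtration of cellular chain complexes. In it, $\Sigma$ contributes an acyclic complex and each $G\in\mathcal{B}$ contributes a free summand of $\ZZ$ in a single degree, which again gives freeness.
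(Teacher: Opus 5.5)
Your proof is correct. It shares the paper's two opening moves. First, contractibility of the full simplex $\N_0$ reduces the claim to freeness of $\tilde H_*(\N_1;\ZZ)$. You do this uniformly with the reduced long exact sequence, while the paper handles $j=1$ with a separate short exact sequence. Second, both observe that $\N_1=\{F:\sum_{i\in F}c_i\le n-1\}$ is shifted.

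The genuine difference is how freeness is obtained. The paper cites Bj\"orner--Wachs twice: shifted complexes are shellable, and shellable complexes have free homology. You instead give a direct near-cone argument. The closed star $\Sigma$ of vertex $1$ is a cone. Shifting toward vertex $1$ forces every proper subface of a face outside $\Sigma$ into $\Sigma$. Hence $C_\bullet(\N_1,\Sigma;\ZZ)$ has zero differential and is free on $\mathcal{B}$, and $\tilde H_*(\N_1;\ZZ)\cong H_*(\N_1,\Sigma;\ZZ)$.

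What your route buys:
\begin{itemize}
\item It is self-contained and needs only the near-cone property at vertex $1$, not full shiftedness.
\item It computes the Betti numbers explicitly.
\item Your $\mathcal{B}$ is exactly the set of facets of $\N_1$ not containing $1$. If $G\cup\{j\}\in\N_1$ with $j\notin G$, $j\neq 1$, then shifting $j\mapsto 1$ puts $G\cup\{1\}\in\N_1$.
\item That set is precisely the set of homotopy facets the paper later extracts from Bj\"orner--Wachs' restriction map. It does so in Step 2 of Theorem \ref{thm:Lyubeznik-linearly-general} and in the proof of Theorem \ref{thm:qccd-linearly-general}. So your argument would also cover those steps without the shelling machinery.
\end{itemize}

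The paper's route is shorter on the page and stays in the restriction-map language it reuses later.

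A small editorial point: your sentence about choosing a minimal $G$ is vestigial. Your key observation already shows that every element of $\mathcal{B}$ has all its proper faces in $\Sigma$.
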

\begin{proof}
As every $V_i$ has positive dimension, we have $H_0(\N_0,\N_1;\ZZ)=0$; we will thus focus on $H_i(\N_0,\N_1;\ZZ)$ for $i>0$. In this case, since $\N_0$ is a simplex, we have isomorphisms
$$H_i(\N_0,\N_1;\ZZ)\cong H_{i-1}(\N_1;\ZZ), \, \, \, \, \, \, i>1$$ and a short exact sequence
$$0 \rightarrow H_1(\N_0,\N_1;\ZZ) \rightarrow H_0(\N_1;\ZZ) \rightarrow \ZZ \rightarrow 0.$$ As $H_0(\N_1;\ZZ)$ is free, so is its subgroup $H_1(\N_0,\N_1;\ZZ)$. Thus it suffices to prove that $H_i(\N_1;\ZZ)$ is free for $i>0$.

Let us look more carefully at $\N_1$: if $c_i$ is the codimension of $V_i\subseteq \mathbb{A}_\KK^n$, then $F\subseteq [s]$ is a face of $\N_1$ if and only if $\sum_{i\in F}c_i\leq n-1$. This is a shifted complex in the sense of \cite[Definition 11.2]{B-W2}. Thus by \cite[Theorem 11.3]{B-W2} it is shellable, and so by \cite[Theorem 4.1]{B-W1}  (or \cite[Corollary 4.2]{B-W1}) its homology groups are free.
\end{proof}

The following is the main result of this paper.

\begin{theorem} \label{thm:Lyubeznik-linearly-general}
Let $\V$ be a linearly general subspace arrangement. Suppose there is an $F \subseteq [s]$ of minimal cardinality with respect to the property
$$\sum_{i \in F} c_i \le \min \left\{n,\sum_{i \in [s]} c_i\right\}-1 < c_{\alpha}+ \sum_{i \in F} c_i$$
for any $\alpha \in [s] \setminus F$, such that $1 \not\in F$. Then 
$$ \ecd(\AA_\KK^n \setminus X) \ge \dim(\AA_\KK^n \setminus X) + \qccd(\AA_\KK^n \setminus X).$$
\end{theorem}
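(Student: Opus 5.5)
The plan is to compare the two sides through their combinatorial formulas: Theorem \ref{thm:PW} for $\qccd$ and Theorem \ref{thm:Yan} for the \'etale cohomology. Write $U=\AA^n_\KK\setminus X$, so $\dim(U)=n$, and set $m=\min_{d,j}\big[d+j:\tilde H_j(\N_d;\KK)\neq 0\big]$. Then Theorem \ref{thm:PW} gives $\qccd(U)=n-1-m$, and the goal becomes $\ecd(U)\ge 2n-1-m$.

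\emph{Lower bound for $\ecd$.} I will use Theorem \ref{thm:Yan} with the single summand indexed by the smallest intersection $W_0=\cap_{i\in[s]}V_i$, of dimension $e=\max\{0,n-\sum_i c_i\}$. For this $W_0$ we have $\N_{W_0}=\N_e$ (the full simplex) and $\N_{W_0}^\circ=\N_{e+1}$. Hence $\ecd(U)\ge 2n-2e-1-k$ whenever $H^k(\N_e,\N_{e+1};\ZZ/\ell\ZZ)\neq 0$.

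Lemma \ref{lem:free} (in the case $e=0$) says that $H_\ast(\N_0,\N_1;\ZZ)$ is free. So by universal coefficients, the non-vanishing of $H^k(\N_0,\N_1;\ZZ/\ell\ZZ)$ is independent of $\ell$ and equivalent to $H_k(\N_0,\N_1;\KK)\neq 0$, i.e.\ to $\tilde H_{k-1}(\N_1;\KK)\neq 0$. I would first reduce to $e=0$ by splitting off $W_0$: the arrangement is a product of an essential arrangement in $\AA^{n-e}$ with $\AA^e$. I expect this reduction, i.e.\ bookkeeping how $\ecd$, $\dim$ and the index $d$ in Theorem \ref{thm:PW} shift by $e$, to need care; it is exactly where the $\min\{n,\sum c_i\}$ in the hypothesis enters. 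Below I treat $e=0$, where the condition on $F$ says precisely that $F$ is a facet of $\N_1$.

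\emph{Homology of the $\N_d$.} By linear generality, $G$ is a face of $\N_d$ if and only if $\sum_{i\in G}c_i\le n-d$. So each $\N_d$ is shifted with respect to the order $c_1\le\dots\le c_s$, and shellable as in the proof of Lemma \ref{lem:free}. For a shifted shellable complex, the results of Bj\"orner--Wachs give the reduced homology explicitly: it is free, with a basis indexed by the facets not containing the vertex $1$, each contributing in degree equal to its dimension.

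Consequently:
\begin{itemize}
\item $\tilde H_j(\N_d)\neq 0$ if and only if $\N_d$ has a $j$-dimensional facet avoiding $1$.
\item For $d=1$, the minimal such $j$ is $|F|-1$, by the minimality of $F$ in the hypothesis.
\end{itemize}
This yields $k=|F|$, so $\ecd(U)\ge 2n-1-|F|$.

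\emph{Combinatorial comparison (main obstacle).} It remains to show $m=|F|$, i.e.\ the minimum in Theorem \ref{thm:PW} is attained at $d=1$. Concretely, for every $d\ge 1$ and every facet $G$ of $\N_d$ with $1\notin G$, I need $d+|G|-1\ge |F|$. (For $d=0$ the complex is a simplex, and $\N_d$ is empty or trivial for large $d$.)

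The attempted argument: start from $G$ and greedily add vertices other than $1$ until a facet $G'$ of $\N_1$ is reached. The available extra budget is $(n-1)-(n-d)=d-1$, and each $c_i\ge 1$, so at most $d-1$ vertices get added. Shiftedness is then needed to guarantee that the resulting facet still avoids $1$. If $1$ becomes addable, exchange it with a vertex of larger codimension, using that $G$ was already a facet of $\N_d$ without $1$. The result is $|F|\le|G'|\le|G|+d-1$.

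I expect this exchange step, together with the reduction from $e>0$, to be where the real work lies. Once it is done, $\ecd(U)\ge 2n-1-|F|=n+(n-1-m)=\dim(U)+\qccd(U)$.
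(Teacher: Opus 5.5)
Your strategy is viable and differs from the paper's. The paper works with the integral groups $H_j(\N_d,\N_{d+1};\ZZ)$ and pushes the minimal $j^*+d^*$ down to $(\N_0,\N_1)$. You instead use the reduced form of Theorem~\ref{thm:PW} and the facet description of the homology of every shifted $\N_d$, which reduces everything to the single inequality $|F|\le|G|+d-1$ and also gives $\qccd=n-1-|F|$ when $e=0$. However, the two steps you defer are where the proof actually lives, and one of them fails as you sketch it.

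\emph{The reduction to $e=0$ is not bookkeeping.} Yan's formula only sees constant coefficients, and $H^*_{\text{\'et}}(U'\times\AA^e_\KK;\ZZ/\ell\ZZ)\cong H^*_{\text{\'et}}(U';\ZZ/\ell\ZZ)$ with $U'=\AA^{n-e}_\KK\setminus X'$. So the $W_0$-summand yields only $2n-2e-1-k$, while the target is $2n-1-m$ with $m=e+m'$: you are short by exactly $e$. What you need is $\ecd(U'\times\AA^e_\KK)\ge\ecd(U')+e$. The paper gets this in its Step 1 from the K\"unneth formula for $p_1^*(\ZZ/\ell\ZZ)\otimes p_2^*\sG$, where $\sG$ is a necessarily non-constant sheaf of $\ZZ/\ell\ZZ$-modules on $\AA^e_\KK$ with $H^e_{\text{\'et}}(\AA^e_\KK,\sG)\neq 0$. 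On the other side, $\qccd$ is unchanged, either by flat base change or simply because $\N_d(\V)=\N_{d-e}(\V')$. The hypothesis transfers because $\min\{n,\sum c_i\}=n-e$.

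\emph{In the combinatorial comparison, the exchange step is unnecessary and in general impossible.} The hypothesis makes $|F|$ the minimum cardinality of \emph{all} facets of $\N_1$. Hence $|F|\le|G'|$ for any facet $G'\supseteq G$ of $\N_1$, whether or not $1\in G'$.

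Insisting that $G'$ avoid $1$ breaks down. Take $n=7$ and $(c_1,\dots,c_6)=(1,2,2,2,5,6)$. The hypothesis holds with $F=\{6\}$, and $G=\{5\}$ is a facet of $\N_2$ avoiding $1$, yet the only facet of $\N_1$ containing $G$ is $\{1,5\}$.

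Comparing only with facets avoiding $1$ would even give a false inequality. Take $n=7$ and $c=(1,2,2,2,5)$, which is precisely the situation the hypothesis excludes. The only facet of $\N_1$ avoiding $1$ is $\{2,3,4\}$, while $d=2$, $G=\{5\}$ gives $d+|G|-1=2$.

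Your count of added vertices also needs repair, since $\sum_{i\in G}c_i$ may be smaller than $n-d$. Instead, use $G\cup\{1\}\notin\N_d$, which gives $\sum_{i\in G}c_i\ge n-d-c_1+1$. The $t$ added vertices each have codimension at least $c_1$, so they satisfy $tc_1\le d+c_1-2$. Hence $t=0$ if $d=1$ and $t\le d-1$ if $d\ge 2$, which is the inequality you need. With these two repairs your argument goes through.
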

\begin{proof}
\underline{\emph{Step 1}.}  
Let $R = \KK[x_1,\dots,x_n]$ be the polynomial ring associated to the affine space $\AA_\KK^n$, and $I_i$ the ideal of linear forms corresponding to $V_i$. Then $I = \cap_{i \in [s]} I_i$ is the ideal of $R$ that defines the scheme-theoretic union $X$ of the $V_i$'s. If $\cap_{i \in [s]}V_i \neq 0$, and since $\V$ is linearly general, the $I_i$'s are generated by disjoint variables, which we may take to be $x_1,\dots,x_h$, where $h<n$ is the codimension of $\cap_{i \in [s]}V_i$. There is a partition $\sqcup_{i \in [s]} A_i$ of $[h]$ such that $I_i$ is generated by the variables indexed by $A_i$. With $R'=\KK[x_1,\dots,x_h]$, $I'_i = (x_\alpha: \, \alpha \in A_i)$ and $I' = \cap_{i \in [s]}I_i'$, we have that $I = I' R$. With $X'$ the union of linear spaces in $\AA_\KK^h$ defined by $I'$, we have 
$$ \AA_\KK^n\setminus X = \big(\AA_\KK^h\setminus X' \big) \times_{\KK} \AA_{\KK}^{n-h}.$$ Let $\ell$ be a prime number other than $\cha(\KK)$, such that 
$$ \gamma:=\ecd\big(\AA_\KK^h\setminus X' \big) = \ecd_\ell \big(\AA_\KK^h\setminus X' \big).$$
Thus there exists an $\ell$-torsion abelian sheaf $\sF$ on $\AA_\KK^h\setminus X'$ such that $H_{\text{\'et}}^\gamma\big(\AA_\KK^h\setminus X',\sF\big) \neq 0$. As $\sF$ is the direct limit of its subsheaves $\sF[\ell^\nu]:= \ker(\sF \overset{\ell^\nu}{\rightarrow} \sF)$, and \'etale cohomology commutes with direct limits, we may assume that $\sF$ is an abelian sheaf of $\ZZ/\ell^\nu \ZZ$-modules for some positive integer $\nu$. Using the short exact sequence
$$0 \rightarrow \sF[\ell] \rightarrow \sF[\ell^\nu] \rightarrow \sF[\ell^{\nu-1}] \rightarrow 0$$
and induction, we may further assume that $\sF$ is an abelian sheaf of $\ZZ/\ell \ZZ$-vector spaces. As $\ecd_\ell\big(\AA_\KK^{n-h}) = n-h$, there is, similarly, an abelian sheaf $\sG$ of $\ZZ/\ell \ZZ$-vector spaces on $\AA_\KK^{n-h}$, such that $H^{n-h}_{\text{\'et}}\big(\AA_\KK^{n-h},\sG\big) \neq 0$. Now, the K\"unneth formula \citeproject{0F1P} reads 
$$R\Gamma\big((\AA_\KK^h\setminus X') \times_{\KK} \AA_\KK^{n-h}, p_1^*\sF \otimes_{\ZZ/\ell \ZZ}^{\mathbb{L}} p_2^* \sG \big) = R \Gamma \big(\AA_\KK^h\setminus X',\sF\big) \otimes_{\ZZ/\ell \ZZ}^{\mathbb{L}} R \Gamma \big(\AA_\KK^{n-h},\sG\big),$$ where $p_1$ and $p_2$ are the canonical projections from $(\AA_\KK^h\setminus X') \times_{\KK} \AA_\KK^{n-h}$ to the first and second factor respectively. With $\mathscr{P} = p_1^*\sF \otimes_{\ZZ/\ell \ZZ}^{\mathbb{L}} p_2^* \sG$, the definition of the derived tensor product and the K\"unneth spectral sequence \cite[Theorem 10.90]{rotman} give
$$\underbrace{\bigoplus_{a+b = q} {\Tor}_{p}^{\ZZ/\ell\ZZ}\Big(H_{\text{\'et}}^a\big(\AA_\KK^h\setminus X',\sF\big),H_{\text{\'et}}^b\big(\AA_\KK^{n-h},\sG\big) \Big)}_{E^2_{p,q}} \Rightarrow H_{\text{\'et}}^{p+q}\big((\AA_\KK^h\setminus X')\times_{\KK} \AA_\KK^{n-h}, \mathscr{P} \big).$$ All higher $\Tor$ modules vanish because their computation is over the field $\ZZ/\ell \ZZ$; hence the spectral sequence collapses on the $q$-axis, and so $$ \bigoplus_{a+b = q} H_{\text{\'et}}^a\big(\AA_\KK^h\setminus X',\sF\big) \otimes_{\ZZ/\ell \ZZ} H_{\text{\'et}}^b\big(\AA_\KK^{n-h},\sG\big) = H_{\text{\'et}}^{q}\big((\AA_\KK^h\setminus X')\times_{\KK} \AA_\KK^{n-h}, \mathscr{P} \big).$$ Setting $q = \gamma+n-h$ in the above formula, we obtain
$$H_{\text{\'et}}^{q}\big((\AA_\KK^h\setminus X') \times_{\KK} \AA_\KK^{n-h}, \mathscr{P} \big) = H_{\text{\'et}}^\gamma\big((\AA_\KK^h\setminus X'),\sF\big) \otimes_{\ZZ/\ell \ZZ} H_{\text{\'et}}^{n-h}\big(\AA_\KK^{n-h},\sG\big) \neq 0,$$ as all other direct summands vanish. This shows that 
$$ \ecd \big(\AA_\KK^n\setminus X\big) \ge \ecd \big(\AA_\KK^{h}\setminus X'\big) + n-h.$$ Now, letting $V_i'$ be the linear subspace of $\AA_\KK^h$ defined by the ideal $I_i'$, and $\V'$ the corresponding subspace arrangement, we see that it is enough to prove the statement for $\V'$, because the faithful flatness of the ring map $R'\rightarrow R$ ensures that $\cd(R,I) = \cd(R',I')$, or equivalently
$$ \qccd \big(\AA_\KK^n\setminus X\big)= \qccd \big(\AA_\KK^{h}\setminus X'\big).$$ As $\cap_{i \in [s]}V_i' = 0$, we will henceforth assume that $\cap_{i \in [s]}V_i = 0$. 

\underline{\emph{Step 2}.} We first consider relative simplicial homology $H_{j}\big(\N_{d},\N_{d+1}; \ZZ\big)$ with integral coefficients. Let 
$j^*+d^*$ be minimal such that $H_{j^*}\big(\N_{d^*},\N_{d^*+1}; \ZZ \big) \neq 0$. We will argue that  
$$H_{j^*+d^*}\big(\N_{0},\N_{1}; \ZZ \big) \neq 0.$$
If $d^* = 0$, then there is nothing to argue about, so suppose $d^*>0$. 

Quite generally, for any $j$ and $d$, at homological position $j$ the relative chain complex $C_\bullet(\N_d,\N_{d+1}; \ZZ)$ is supported on those subsets $F \subseteq [s]$ of cardinality $j+1$, for which the dimension of $\cap_{i \in F} V_i$ is exactly $d$. If $d>0$, and as $\V$ is linearly general,  
$$ \dim_\KK \big(\cap_{i \in F'} V_i\big) > \dim_\KK \big(\cap_{i \in F}V_i\big),$$
for any proper non-empty subset $F'$ of $F$. Hence, as already noted by Pascoe \& Witt \cite{Pascoe-Witt}, all differentials of $C_\bullet(\N_d,\N_{d+1})$ are zero, and so
$$ H_j(\N_d,\N_{d+1}; \ZZ) = C_j(\N_d,\N_{d+1}; \ZZ) \, \, \, \text{for} \, \, \, d>0.$$

As $H_{j^*}(\N_{d^*},\N_{{d^*}+1};\ZZ) \neq 0$, we also have that $C_{j^*}(\N_{d^*},\N_{{d^*}+1};\ZZ) \neq 0$. Hence there is an $F \subset [s]$ of cardinality $j^*+1$, such that the dimension of $\cap_{i \in F} V_i$ is $d^*$. Select $\alpha \in [s] \setminus F$ to be maximal. As by hypothesis $\cap_{i \in [s]}V_i =0$ and $\V$ is linearly general, we have 
$$ d':=\dim_\KK \big(\cap_{i \in F \cup \{\alpha \} } V_i\big) < \dim_\KK \big(\cap_{i \in F}V_i\big) = d^*.$$ Now, $F \cup \{\alpha\}$ supports a generator of $C_\bullet(\N_{d'},\N_{d'+1};\ZZ)$ at homological position $j^*+1$. If $d'>0$, the minimality of $j^*+d^*$ implies that $d' = d^*-1$. Then the maximality of $\alpha$ implies that all $V_i$'s with $i \in [s] \setminus F$ are hyperplanes, and proceeding inductively, we may assume that $d^*=1$. Thus at any case, we may assume that $d'=0$ for any $\alpha \in [s] \setminus F$.

We have that $F$ is a facet of the complex $\N_1$. As observed in the proof of Lemma \ref{lem:free}, $\N_1$ is a shifted complex and in particular shellable. By \cite[Theorem 4.3]{B-W1} the reduced homology groups of a shellable complex are supported on the homotopy facets of the complex, i.e. those facets $G$ that satisfy $\mathcal{R}(G) = G$, where $\mathcal{R}(\cdot)$ is the restriction map (see (2.1) in \cite{B-W1}). In particular, the Betti numbers of a shellable complex are obtained by enumerating its homotopy facets. For shifted complexes, \cite[Theorem 11.4]{B-W2} asserts that the restriction map is given by $\mathcal{R}(G) = G \setminus [j]$, where $j$ is minimal such that $j+1 \notin G$. It follows from this characterization that $G$ is a homotopy facet of $\N_1$ if and only if $G$ is a facet of $\N_1$ and $1 \notin G$. So, if $1 \notin F$, then $F$ is a homotopy facet of $\N_1$. If on the other hand $1 \in F$, then $F$ is not a homotopy facet of $\N_1$, but it is facet of $\N_1$ of minimal cardinality (by the minimality of $j^*$). Then our hypothesis implies the existence of a homotopy facet of $\N_1$ of the same cardinality as $F$. At any case, there is a homotopy facet of $\N_1$ of dimension $j^*$, so that $\N_1$ has non-vanishing homology at homological position $j^*$. If $j^*>0$, and recalling that 
$H_j(\N_0,\N_1;\ZZ) \cong H_{j-1}(\N_1;\ZZ)$ for $j>1$, we have that 
$H_{j^*+1}(\N_0,\N_1;\ZZ) \neq 0$ and we are done (note that necessarily $d^*=1$ by the minimality of $j^*+d^*$).

If on the other hand $j^* = 0$, then $F$ is a vertex. As $F$ is a facet of $\N_1$, we have that $\N_1$ is disconnected, and the short exact sequence in the proof of Lemma \ref{lem:free} yields $H_{1}(\N_0,\N_1;\ZZ) \neq 0$ (note that again necessarily $d^*=1$ by the minimality of $j^*+d^*$). 

\underline{\emph{Step 3}.} By Theorem \ref{thm:PW},
$$ \qccd(\AA_\KK^n \setminus X) = n-1-j^\dagger-d^{\dagger},$$
where $j^\dagger+d^\dagger$ is minimal such that $H_{j^\dagger}\big(\N_{d^\dagger},\N_{d^\dagger+1}; \KK\big)$ is non-zero. By the universal coefficient theorem, we have a non-canonically splitting short exact sequence

\begin{tikzcd}
0 \arrow[r] & H_{j}\big(\N_d,\N_{d+1};\ZZ) \otimes_{\ZZ} \KK \arrow[r] & H_{j}\big(\N_d,\N_{d+1};\KK) \arrow[d] &  \\
& & \Tor_1^\ZZ\Big(H_{j-1}\big(\N_d,\N_{d+1};\ZZ), \KK \Big) \arrow[r] & 0,
\end{tikzcd}
from which we see that $j^\dagger + d^\dagger \ge j^*+d^*.$ Hence, $$ \qccd(\AA_\KK^n \setminus X) \le n-1-j^*-d^*.$$

\underline{\emph{Step 4}.} In Theorem \ref{thm:Yan} let us consider the direct summand of 
$$H_{\text{\'et}}^{2n-1-j^*-d^*} \big(\AA_\KK^n \setminus X, \mathbb{Z}/\ell \mathbb{Z}\big)$$ corresponding to $W := \cap_{i \in [s]} V_i = 0$. This summand is 
$$H^{j^*+d^*}\big(\N_W,\N_W^\circ; \mathbb{Z}/\ell \mathbb{Z}\big).$$ As the complex $C_\bullet(\N_W,\N_W^\circ; \mathbb{Z}/\ell \mathbb{Z}\big)$ coincides with $C_\bullet(\N_0,\N_1; \mathbb{Z}/\ell \mathbb{Z}\big)$, and simplicial homology and cohomology are isomorphic over a field, we have that 
$$H^{j^*+d^*}\big(\N_W,\N_W^\circ; \mathbb{Z}/\ell \mathbb{Z}\big) \cong H_{j^*+d^*}\big(\N_0,\N_1; \mathbb{Z}/\ell \mathbb{Z}\big).$$
By Lemma \ref{lem:free}, the abelian group $H_{j^*+d^*}\big(\N_0,\N_1; \ZZ\big)$ is free, and by the definition of $j^*$ and $d^*$ it is non-zero. Hence, $H_{j^*+d^*}\big(\N_0,\N_1; \ZZ\big) \otimes_\ZZ \KK$ is also non-zero. Then, by the short exact sequence of the universal coefficient theorem, $$H_{j^*+d^*}\big(\N_0,\N_1; \mathbb{Z}/\ell \mathbb{Z}\big) \neq 0.$$

\underline{\emph{Step 5}.} By what we have collected so far, it follows that 
\begin{align*}
\ecd(\AA_\KK^n \setminus X) & \stackrel{\text{Step 4}}{\ge} 2n-1-j^*-d^* \\
& \stackrel{\text{Step 3}}{\ge} 2n-1-j^\dagger-d^\dagger \\
& = n + \big(n - 1 - j^\dagger -d^\dagger\big) \\
& \stackrel{\text{Step 3}}{=} \dim\big(\AA_\KK^n \setminus X \big) + \qccd(\AA_\KK^n \setminus X),
\end{align*} and the proof of the theorem is concluded.
\end{proof}

\begin{remark}
From \S \ref{subsection:QCD} and  \S \ref{subsection:ECD}, Theorem \ref{thm:qccd-linearly-general} also implies the inequality
$$ \ecd(\PP_\KK^{n-1} \setminus \bar{X}) \ge \dim(\PP_\KK^{n-1} \setminus \bar{X}) + \qccd(\PP_\KK^{n-1} \setminus \bar{X}),$$ where now $\bar{X}$ denotes $X$ viewed as a projective variety.
\end{remark}

We next characterize $\qccd(\mathbb{A}^n \setminus X)$ in terms of the codimensions $c_1 \le \cdots \le c_s$. First we need some notation.

\begin{definition} \label{dfn:alpha(d)}
Define $\alpha(n) = 0$, and for any integer $1 \le d < n$, define $\alpha(d)$ to be the smallest non-negative integer, such that there are $1<i_1 < \cdots < i_{\alpha(d)} \le s$ with 
\begin{align*}
n-d & \ge c_{i_1} + \cdots + c_{i_{\alpha(d)}}, \\
n-d & < c_1 + c_{i_1} + \cdots + c_{i_{\alpha(d)}}.
\end{align*} \end{definition} 

\noindent As the example below illustrates, $\alpha(d)$ may not exist, while we note that by definition $\alpha(n)$ always exists.  

\begin{example}
Suppose $s=5$ and $n=10$, with $c_1 = 1$ and $c_2 = \cdots = c_5 = 4$. Then $\alpha(0)$ does not exist, because whenever a sum of $c_i$'s with $i>1$ does not exceed $10$, it is less than $10$, so that further adding $c_1$ to it yields a sum that is still less than $10$. 
\end{example}

\noindent As $\alpha(n) = 0$, the minimum in the following statement is taken over a set that is always non-empty.

\begin{theorem} \label{thm:qccd-linearly-general}
With $\V$ linearly general and $\alpha(d)$ as in Definition \ref{dfn:alpha(d)}, we have $$\qccd\big(\AA^n \setminus X \big) = n - \min_{1\le d \le n} \{d+\alpha(d) \},$$ where the minimum is over those $d$'s for which $\alpha(d)$ exists.
\end{theorem}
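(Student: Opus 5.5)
The plan is to start from the first formula of Theorem \ref{thm:PW},
\[
\qccd(\AA^n\setminus X)=n-1-\min_{d,j}\bigl[d+j:\ H_j(\N_d,\N_{d+1};\KK)\neq 0\bigr],
\]
and to compute the right-hand minimum purely in terms of the codimensions $c_1\le\cdots\le c_s$. The target identity is
\[
\min_{d,j}\bigl[d+j:\ H_j(\N_d,\N_{d+1};\KK)\neq 0\bigr]=\min_{d}\{d+\alpha(d)\}-1 .
\]
Throughout, linear generality gives $\dim\bigl(\cap_{i\in F}V_i\bigr)=\max\{0,\,n-\sum_{i\in F}c_i\}$. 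This turns every question about the complexes $\N_d$ into a question about subset sums of the $c_i$. I would split into the two regimes $d\ge 1$ and $d=0$, as in Step 2 of the proof of Theorem \ref{thm:Lyubeznik-linearly-general}.

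\emph{Regime $d\ge1$.} As observed there (following Pascoe \& Witt), all differentials of $C_\bullet(\N_d,\N_{d+1})$ vanish. Hence $H_j(\N_d,\N_{d+1};\KK)\ne0$ if and only if there is an $F$ with $|F|=j+1$ and $\sum_{i\in F}c_i=n-d$. This regime therefore contributes the candidates $n-\sum_F c_i+|F|-1$, over all $F$ with $\sum_F c_i\le n-1$ (the case $F=\emptyset$, $d=n$, matches $\alpha(n)=0$).

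\emph{Regime $d=0$.} By Lemma \ref{lem:free} the groups are free, so the field does not matter. Moreover $H_j(\N_0,\N_1)\cong \tilde H_{j-1}(\N_1)$, and $\N_1$ is shifted. By \cite[Theorems 4.3, 11.4]{B-W1,B-W2}, $\tilde H_{j-1}(\N_1)\ne0$ exactly when there is a facet $G$ of $\N_1$ with $1\notin G$ and $|G|=j$. By shiftedness, $G$ is a facet with $1\notin G$ if and only if $G\subseteq\{2,\dots,s\}$, $\sum_G c_i\le n-1$ and $\sum_G c_i+c_1\ge n$. These are exactly the sets witnessing the definition of $\alpha(0)$ with $n-0$ replaced by $n-1$; the candidate they contribute is $|G|$.

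The main step is the combinatorial comparison, which I would organise as an exchange argument. First, given a witness $\{i_1,\dots,i_{\alpha(d)}\}$ for $\alpha(d)$ with $d\ge1$, set $d'=n-\sum c_{i_k}\ge d$. Then either $d'\ge1$ gives a regime-one class, or one lands in regime zero. In either case the Pascoe--Witt minimum is at most $d+\alpha(d)-1$; some care is needed because $d'$ may exceed $d$, and here the inequality $n-d<c_1+\sum c_{i_k}$ should be used to absorb the difference by adjoining the index $1$. Conversely, given an optimal $F$ from regime one, I would argue as follows. If $1\in F$, remove it; if the resulting set fails the upper condition of Definition \ref{dfn:alpha(d)}, enlarge greedily using the indices of largest codimension. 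Using $c_1\le c_i$, one then shows that some $d$ has $d+\alpha(d)\le n-\sum_Fc_i+|F|$. The regime-zero facets are handled in the same way, and the $d=0$ boundary needs separate attention since Definition \ref{dfn:alpha(d)} only ranges over $d\ge1$ in the minimum.

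The expected obstacle is precisely this bookkeeping between sets containing $1$ (allowed in Pascoe--Witt) and sets avoiding $1$ with the ``$c_1$-maximality'' condition (required in $\alpha(d)$). My first attempt would be a direct swap lemma: any optimal $F$ can be replaced by one of the same value of $d+|F|$ that avoids $1$ and is $c_1$-maximal. I would test it against the example with $c=(1,4,4,4,4)$, $n=10$, before trusting it.
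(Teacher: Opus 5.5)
\textbf{Gap: the comparison step is not done, and the mechanism you sketch for it fails.} Your reduction is sound. From the first formula of Theorem \ref{thm:PW}, the classes at a level $d\ge1$ are the sets $F$ with $\sum_{i\in F}c_i=n-d$, with value $d+|F|-1$. The level-$0$ classes are the facets $G$ of $\N_1$ avoiding $1$, with value $|G|$. But the comparison with $\min_d\{d+\alpha(d)\}-1$, which is the whole content, is left as a plan.

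\emph{Lower bound.} Suppose $1\in F$ and you delete it. Then $F_0=F\setminus\{1\}$ has $\sum_{F_0}c_i=n-d-c_1$, so at level $d$ it is the strict inequality $n-d<c_1+\sum_{F_0}c_i$ that fails. Enlarging $F_0$ at the same level can be impossible. For $n=10$ and $c=(1,5,5)$, the class $F=\{1,2\}$ lies at $d=4$ with value $5$, yet $\alpha(4)$ does not exist: it would need a sum of $c_i$'s with $i>1$ equal to $6$. The correct move is to shift the level, not the set. Since $\sum_{F_0}c_i\le n-(d+1)<n-d=c_1+\sum_{F_0}c_i$, the set $F_0$ witnesses $\alpha(d+1)$, and $(d+1)+|F_0|=d+|F|$; here $d+1\le n$ because $c_1\ge1$. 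If $1\notin F$, then $F$ itself witnesses $\alpha(d)$. The level-$0$ classes are exactly the witnesses of $\alpha(1)$, so there is no separate $d=0$ boundary issue.

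\emph{Upper bound.} Your dichotomy on $d'=n-\sum_G c_i$ is vacuous, since $d'\ge d\ge1$ always. Moreover, the level-$d'$ class has value $d'+|G|-1\ge d+|G|-1$, which is the wrong direction. The dichotomy must be made after adjoining $1$. If $c_1+\sum_G c_i\le n-1$, then $G\cup\{1\}$ is a class at some level $d''<d$, with value $d''+|G|\le d+|G|-1$. Otherwise $G$ is a facet of $\N_1$ avoiding $1$, with value $|G|\le d+|G|-1$. With these few lines your route closes.

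\textbf{How the paper avoids this.} The paper uses the \emph{second} formula of Theorem \ref{thm:PW} instead. For $1\le d\le n$, the complex $\N_d=\{F:\sum_{i\in F}c_i\le n-d\}$ is itself shifted. By Bj\"orner--Wachs, its facets avoiding $1$ carry all of its free reduced homology, and these facets are precisely the witnesses of $\alpha(d)$. So $\alpha(d)-1$ is the lowest nonvanishing degree of $\tilde H_\bullet(\N_d;\KK)$, and $d=0$ drops out because $\N_0$ is a simplex. Your route, once completed, essentially re-proves the equality of the two Pascoe--Witt minima in the linearly general case, at the cost of the extra bookkeeping above.
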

\begin{proof}
Theorem \ref{thm:PW} gives 
\begin{align*}
\qccd(\AA_\KK^n \setminus X) &= n -1 - \min_{d,j} \big[d+j : \, \tilde{H}_j\big(\N_d;\KK\big) \neq 0 \big].
\end{align*} As $\N_0$ is a simplex, $\tilde{H}_j\big(\N_0; \KK \big)=0$ for every $j$, and so in the above formula we may restrict $d$ to be positive. Furthermore, if $d>n$, $\N_d$ is empty, so it has non-vanishing reduced homology only for $j=-1$ and the expression above on the right-hand-side becomes negative. As $\qccd(\AA_\KK^n \setminus X)$ is always non-negative, it is harmless to further assume $d \le n$, that is
\begin{align*}
\qccd(\AA_\KK^n \setminus X) &= n -1 - \min_{1 \le d \le n, \, j} \big[d+j : \, \tilde{H}_j\big(\N_d;\KK\big) \neq 0 \big].
\end{align*} 

Now, when $d >0$, $\N_d$ is the shifted complex on vertices $[s]$ with $F\subseteq [s]$ a face if and only if $\sum_{i \in F} c_i \le n-d$. Then the existence of a positive $\alpha(d)$ indicates the presence of a homotopy facet of $\N_d$ of dimension $\alpha(d)-1$ (see Step 2 of the proof of Theorem \ref{thm:Lyubeznik-linearly-general}). The minimality of $\alpha(d)$ then implies that $\alpha(d)-1$ is the minimal $j$ such that $\tilde{H}_j\big(\N_d; \ZZ\big) \neq 0$. If on the other hand, $\alpha(d)$ exists but it is zero, then necessarily $\N_d$ is empty, so that again $\alpha(d)-1=-1$ is the smallest homological index where $\tilde{H}_j\big(\N_d; \ZZ\big)$ is non-zero. To summarize, $\alpha(d)-1$ is the smallest $j$ such that $\tilde{H}_j\big(\N_d; \ZZ\big)$ is non-zero, and such a $j$ exists if and only if $\alpha(d)$ exists.

By the universal coefficient theorem, we have a short exact sequence

\begin{tikzcd}
0 \arrow[r] & H_{j}\big(\N_d;\ZZ) \otimes_{\ZZ} \KK \arrow[r] & H_{j}\big(\N_d;\KK) \arrow[d] &  \\
& & \Tor_1^\ZZ\Big(H_{j-1}\big(\N_d;\ZZ), \KK \Big) \arrow[r] & 0.
\end{tikzcd} 

\noindent As $\N_d$ is shellable, its homology is free, so 
$$ H_{j}\big(\N_d;\ZZ) \otimes_{\ZZ} \KK \cong H_{j}\big(\N_d;\KK).$$
Consequently, $\alpha(d)-1$ is the smallest $j$ such that $\tilde{H}_j\big(\N_d; \KK\big)$ is non-zero (if such a $j$ exists) and the above formula becomes
\begin{align*}
\qccd(\AA_\KK^n \setminus X) = n -1 - \min_{1 \le d \le n}  \left\{d+\alpha(d)-1 \right\}  = n - \min_{1 \le d \le n}  \left\{d+\alpha(d) \right\},
\end{align*} where the minimum is over those $d$'s for which $\alpha(d)$ exists.
\end{proof}

The special case of equidimensional linearly general subspace arrangements has already been studied by Pascoe \& Witt \cite{Pascoe-Witt}. While a compact formula for the quasi-coherent dimension can be easily extracted from their Theorem 5.18, here we derive it as a corollary to Theorem \ref{thm:qccd-linearly-general}.

\begin{theorem} \label{thm:qccd-linearly-general-equidimensional}
Let $\V$ be linearly general with $\codim(V_i)=c$ for every $i \in [s]$. Then 
$$\qccd\big(\AA^n \setminus X \big) = \min\left\{n- \left\lceil \frac{n}{c} \right\rceil, s(c-1) \right\}.$$
\end{theorem}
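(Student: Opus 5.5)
The plan is to specialize Theorem \ref{thm:qccd-linearly-general} to $c_1=\cdots=c_s=c$. This reduces the statement to a one-variable minimization: compute $\alpha(d)$ explicitly, then minimize $d+\alpha(d)$ over the admissible range of $d$.

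\emph{Computing $\alpha(d)$.} Fix $1\le d<n$. Since all codimensions equal $c$, the conditions of Definition \ref{dfn:alpha(d)} on indices $1<i_1<\cdots<i_a\le s$ depend only on their number $a$. They read $ac\le n-d<(a+1)c$. Hence the only candidate is $a=\lfloor (n-d)/c\rfloor$, and it can be realized only if $a\le s-1$, since only the $s-1$ indices $2,\dots,s$ are available. So $\alpha(d)$ exists if and only if $\lfloor (n-d)/c\rfloor\le s-1$, i.e.\ $d>n-sc$, and then $\alpha(d)=\lfloor (n-d)/c\rfloor$. The same formula also covers $d=n$, where $\alpha(n)=0$. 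Thus
$$\qccd(\AA^n\setminus X)=n-\min\{\,g(d) : \max(1,n-sc+1)\le d\le n\,\},\qquad g(d):=d+\left\lfloor\frac{n-d}{c}\right\rfloor .$$

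\emph{Minimizing.} The function $g$ is nondecreasing in $d$: raising $d$ by one lowers the floor term by at most one. So the minimum is attained at $d_0=\max(1,n-sc+1)$, which is at most $n$. There are two cases.
\begin{itemize}
\item If $n\le sc-1$, then $d_0=1$ and $g(1)=1+\lfloor (n-1)/c\rfloor=\lceil n/c\rceil$, giving $\qccd=n-\lceil n/c\rceil$.
\item If $n\ge sc$, then $d_0=n-sc+1$ and $g(d_0)=n-sc+1+\lfloor (sc-1)/c\rfloor=n-sc+s$, giving $\qccd=s(c-1)$.
\end{itemize}

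\emph{Combining into the min formula.} It remains to show that in each case the value obtained is the smaller of $n-\lceil n/c\rceil$ and $s(c-1)$. Put $h(m)=m-\lceil m/c\rceil$. Then $h$ is nondecreasing in $m$, by the same one-step argument as for $g$, and $h(sc)=s(c-1)$. Consequently $n<sc$ gives $h(n)\le s(c-1)$, and $n\ge sc$ gives $h(n)\ge s(c-1)$, which is exactly what is needed.

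The only delicate points are bookkeeping: the index restriction $i_1>1$ caps $a$ at $s-1$, which is precisely what produces the term $s(c-1)$, and one must check that $d=n$ and the range $1\le d\le n$ behave consistently with $c\le n-1$. I expect no real obstacle beyond this careful handling of floors and ceilings.
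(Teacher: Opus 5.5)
Your proposal is correct and follows the paper's own proof essentially step for step. You specialize Theorem~\ref{thm:qccd-linearly-general}, show that $\alpha(d)=\lfloor (n-d)/c\rfloor$ exists exactly when $d>n-sc$, use monotonicity of $d+\lfloor (n-d)/c\rfloor$, and split into the cases $sc\le n$ and $sc>n$. Your monotonicity argument for $m-\lceil m/c\rceil$, together with $h(sc)=s(c-1)$, cleanly supplies the final comparison that the paper leaves as ``one checks easily.''
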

\begin{proof}
First, suppose $sc \le n$. For $d \le n-sc$, $\alpha(d)$ does not exist. For $d>n-sc$, $\alpha(d) = \left\lfloor \frac{n-d}{c} \right\rfloor.$ Now, the function
$d + \left\lfloor \frac{n-d}{c} \right\rfloor$ is increasing in $d$, so its minimum (for $d>n-sc$) is achieved for $d = n-sc+1$. Hence Theorem \ref{thm:qccd-linearly-general} reads
\begin{align*}
\qccd\big(\AA^n \setminus X \big) &= n- \left\{n-sc+1+ \left\lfloor \frac{n-(n-sc+1)}{c} \right\rfloor\right\} \\
&=sc-1-(s-1) = s(c-1).
\end{align*} 

Next, suppose $sc >n$. Then $\alpha(d)=\left\lfloor \frac{n-d}{c} \right\rfloor$ for every $1 \le d \le n$, and Theorem \ref{thm:qccd-linearly-general} reads
\begin{align*}
\qccd\big(\AA^n \setminus X \big) &= n - \min_{1 \le d \le n} \left\{d + \left\lfloor \frac{n-d}{c} \right\rfloor \right\} \\
&= n- \left(1+ \left\lfloor \frac{n-1}{c} \right\rfloor\right) = n-\left\lceil \frac{n}{c} \right\rceil,
\end{align*} where the last equality is easily verified. Finally, one again checks easily that 
$$ n-\left\lceil \frac{n}{c} \right\rceil \le s(c-1)$$
if $sc > n$, and similarly for the other direction.
\end{proof} 

\begin{remark} \label{rem:Faltings-linearly-general}
In the context of Theorem \ref{thm:qccd-linearly-general-equidimensional},  when $sc \ge n$, $\qccd \big(\AA^n \setminus X\big)$ coincides with the bound of Faltings given in Theorem \ref{thm:Faltings}; for this, one just needs to note that $\left\lfloor n- \frac{n}{c} \right\rfloor=n-\left\lceil \frac{n}{c} \right\rceil$. In fact, the union of $\left\lfloor \frac{n}{c} \right\rfloor +1$ linearly general linear spaces of codimension $c$ is a scheme of the same type as the scheme that was used by Lyubeznik to show that Faltings's bound is tight (see \cite[Theorem]{lyubeznik1985some}). 
\end{remark}

The next result, besides providing an explicit formula for the \'etale cohomological dimension for the complement of the union of linearly general equidimensional linear spaces, also serves as an example where Lyubeznik's conjecture holds tightly. 

\begin{theorem} \label{thm:ecd-linearly-general-equidimensional}
Let $\V$ be linearly general, equidimensional subspace arrangement, with $sc \ge n$, where $c = \codim_\KK(V_i)$ for every $i \in [s]$. Then 
$$\ecd\big(\AA^n \setminus X\big) = 2n - \left\lceil \frac{n}{c} \right\rceil = n+
\qccd\big(\AA^n \setminus X\big).$$
\end{theorem}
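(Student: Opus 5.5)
The plan is to sandwich $\ecd(\AA^n\setminus X)$ between matching upper and lower bounds, using Theorem~\ref{thm:qccd-linearly-general-equidimensional} and Remark~\ref{rem:Faltings-Lyubeznik} to translate between the \'etale and quasicoherent numbers.

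\emph{Quasicoherent side.} Since $sc\ge n$, we have $n-\lceil n/c\rceil\le s(c-1)$; when $sc=n$ the two quantities coincide. Theorem~\ref{thm:qccd-linearly-general-equidimensional} therefore gives $\qccd(\AA^n\setminus X)=n-\lceil n/c\rceil$. This yields the second equality of the statement, once the first is proved.

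\emph{Upper bound.} Let $R=\KK[x_1,\dots,x_n]$ and let $I$ be the ideal of $X$. Because every component has codimension $c$, the big height of $I$ is $c$. If $c=1$, then $X$ is a hyperplane arrangement. In that case $\AA^n\setminus X$ is affine, so $\ecd=n=2n-n$, and the statement holds. Assume now $c\ge 2$. Then $\ecd(\AA^n,X)\ge \ecd(\AA^n\setminus X)+1$, and Theorem~\ref{thm:Lyubeznik-Faltings-analogue} gives
\[
\ecd(\AA^n\setminus X)\le 2n-\lfloor (n-1)/c\rfloor-1=2n-\lceil n/c\rceil .
\]
Here I use the identity $\lfloor (n-1)/c\rfloor+1=\lceil n/c\rceil$. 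Remark~\ref{rem:Faltings-Lyubeznik} confirms that this is exactly $n$ plus Faltings' bound.

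\emph{Lower bound.} I would apply Theorem~\ref{thm:Lyubeznik-linearly-general}. Its hypothesis needs a minimal-cardinality $F$ avoiding $1$. In the equidimensional case the $c_i$ are all equal, so the indices are interchangeable. The sets $F$ satisfying $|F|c\le \min\{n,sc\}-1<(|F|+1)c$ are exactly the subsets of a fixed cardinality. Since $sc\ge n$, that cardinality is $\lfloor (n-1)/c\rfloor<s$. Any such set may be chosen to omit index $1$, except when it would have to be all of $[s]$, and the inequality $\lfloor (n-1)/c\rfloor<s$ rules that out. Hence Theorem~\ref{thm:Lyubeznik-linearly-general} applies and gives $\ecd\ge n+\qccd=2n-\lceil n/c\rceil$.

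One subtlety is that Theorem~\ref{thm:Lyubeznik-linearly-general} reduces to $\cap V_i=0$ in Step 1. When $sc\ge n$ this already holds by linear generality, so no reduction is needed. As a fallback, if the hypothesis check fails in some edge case, I would compute directly via Theorem~\ref{thm:Yan}, using the $W=0$ summand $H_{j}(\N_0,\N_1)\cong \tilde H_{j-1}(\N_1)$. Here $\N_1$ is the complex of subsets of size at most $\lfloor (n-1)/c\rfloor$, i.e.\ a skeleton of a simplex. Its homology is nonzero in degree $\lfloor (n-1)/c\rfloor-1$ whenever $\lfloor (n-1)/c\rfloor<s$. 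This gives nonvanishing of $H^{r}_{\text{\'et}}$ at $r=2n-1-\lfloor (n-1)/c\rfloor$, matching the upper bound.

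The main obstacle is verifying the exact hypothesis of Theorem~\ref{thm:Lyubeznik-linearly-general}, including the edge cases $c=1$ and $sc=n$. The explicit skeleton computation is the safe route there.
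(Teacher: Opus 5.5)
Your proposal is correct and follows the paper's proof. The lower bound comes from Theorem~\ref{thm:Lyubeznik-linearly-general} combined with Theorem~\ref{thm:qccd-linearly-general-equidimensional}, and the upper bound from Theorem~\ref{thm:Lyubeznik-Faltings-analogue} via $\lfloor (n-1)/c\rfloor+1=\lceil n/c\rceil$; your explicit checks that $\cap_i V_i=0$ and that a minimal $F$ of size $\lfloor (n-1)/c\rfloor<s$ can avoid the index $1$ fill in what the paper leaves implicit. One small wording point: $\ecd(\AA^n,X)\ge \ecd(\AA^n\setminus X)+1$ is only automatic once $\ecd(\AA^n\setminus X)>n$, but the upper bound holds regardless, since $\ecd(\AA^n\setminus X)\le\max\{n,\ecd(\AA^n,X)-1\}$ and Lyubeznik's bound minus one is at least $n$.
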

\begin{proof}
Combining Theorem \ref{thm:Lyubeznik-linearly-general} and Theorem \ref{thm:qccd-linearly-general-equidimensional} we obtain
$$\ecd\big(\AA^n \setminus X\big) \ge 2n - \left\lceil \frac{n}{c} \right\rceil.$$
On the other hand, Theorem \ref{thm:Lyubeznik-Faltings-analogue} asserts the reverse inequality, since $2n-\left\lfloor \frac{n-1}{c} \right\rfloor -1 = 2n - \left\lceil \frac{n}{c} \right\rceil.$
\end{proof}

\begin{remark}
Another way to view Theorem \ref{thm:ecd-linearly-general-equidimensional} is that the complement of an equidimensional linearly general subspace arrangement satisfying $sc \ge n$, is a scheme that simultaneously achieves the upper bounds of Faltings (Theorem \ref{thm:Faltings}) and Lyubeznik (Theorem \ref{thm:Lyubeznik-Faltings-analogue}) for the quasicoherent and \'etale cohomological dimension, respectively. Then by Remark \ref{rem:Faltings-Lyubeznik}, this scheme satisfies Conjecture \ref{conj:lyu} (note that \cite[Theorem 8.5]{lyubeznik1993etale} offers an alternative family of schemes achieving the upper bound of Theorem \ref{thm:Lyubeznik-Faltings-analogue}; see also Remark \ref{rem:Faltings-linearly-general}).
\end{remark}

\bibliographystyle{alpha}
\bibliography{Tsakiris-Varbaro.bbl}

\end{document}